\documentclass[reqno,12pt]{article}

\usepackage{a4wide}
\usepackage{amsmath,stmaryrd} 
\usepackage{amssymb}
\usepackage{amsthm}
\usepackage[utf8]{inputenc} 
\usepackage{graphicx} 
\usepackage[english]{babel}
\usepackage{xcolor}
\usepackage[normalem]{ulem}

\numberwithin{equation}{section}

\newtheorem{theo}{Theorem}

\newtheorem{prop}{Proposition}

\newtheorem{lem}{Lemma}
\newtheorem{defi}{Definition}

\newtheorem{thmx}{Theorem}

\theoremstyle{remark}
\newtheorem{Remark}{Remark}

\newtheorem*{Remark*}{Remark}
\newtheorem*{Remarks*}{Remarks}

\makeatletter
\newcommand*{\house}[1]{%
 \mathord{%
 \mathpalette\@house{#1}%
 }%
}
\newcommand*{\@house}[2]{%
 \dimen@=\fontdimen8 %
 \ifx#1\scriptscriptstyle\scriptscriptfont
 \else\ifx#1\scriptstyle\scriptfont
 \else\textfont\fi\fi
 3 %
 \sbox0{%
 $#1%
 \vrule width\dimen@\relax
 \overline{%
 \kern2\dimen@
 \begingroup 
 #2%
 \endgroup
 \kern2\dimen@
 }%
 \vrule width\dimen@\relax
 \mathsurround=1.5\dimen@ 
 $%
 }%
 \ht0=\dimexpr\ht0-\dimen@\relax
 \dp0=\dimexpr\dp0+2\dimen@\relax
 \vbox{%
 \kern\dimen@ 
 \copy0 %
 }%
}

\newcommand{\dd}{{\rm d}}
\newcommand{\tra}{{}^t}
\newcommand{\N}{\mathbb{N}}
\newcommand{\Z}{\mathbb{Z}}
\newcommand{\Q}{\mathbb{Q}}
\newcommand{\R}{\mathbb{R}}
\newcommand{\C}{\mathbb{C}}
\newcommand{\K}{\mathbb{K}}

\newcommand{\Qbar}{\overline{\mathbb Q}}

\newcommand{\etoile}{^*}

\newcommand{\eps}{\varepsilon}
\newcommand{\ewa}{\eta}
\newcommand{\ord}{{\rm ord}}

\newcommand{\calR}{\mathcal R}

\newcommand{\calF}{\mathcal F}
\newcommand{\calS}{\mathcal S}

\newcommand{\calC}{\mathcal C}
\newcommand{\calV}{\mathcal V}
\newcommand{\Card}{{\rm Card}}

\newcommand{\rk}{{\rm rk}}

\newcommand{\ineg}{N-1\leq | \capa | \leq N}
\newcommand{\inegm}{N-2\leq | \capa | \leq N-1}
\newcommand{\capa}{\kappa}
\newcommand{\Span}{{\rm Span}}
\newcommand{\GL}{{\rm GL}}
\newcommand{\Sm}{\mathfrak S_m}
\newcommand{\Ombar}{{\overline\Omega}}

\newcommand{\unomega}{\llbracket 1, \omega \rrbracket}

\newcommand{\zerobiz}{\llbracket 0,M+t(k-1)-1 \rrbracket}
\newcommand{\zeroMmu}{\llbracket 0,M-1 \rrbracket}
\newcommand{\zeroN}{\llbracket 0,N \rrbracket}

\newcommand{\unm}{\llbracket 1,m \rrbracket}
\newcommand{\undeux}{\{1,2 \}}
\newcommand{\unr}{\llbracket 1,r \rrbracket}
\newcommand{\untheta}{\llbracket 1,\theta \rrbracket}
\newcommand{\untrois}{\llbracket 1,3 \rrbracket}
\newcommand{\quatrev}{\llbracket 4,v \rrbracket}
\newcommand{\Vz}{\mathcal V_0}
\newcommand{\vr}{\varrho}
\newcommand{\cstun}{c_1}
\newcommand{\evun}{{\rm ev}_1}

\begin{document}
\selectlanguage{english}

\title{Rational approximations to values of $E$-functions}
\date\today
\author{S. Fischler and T. Rivoal}
\maketitle

\begin{abstract} We solve a long standing problem in the theory of Siegel's $E$-functions, initiated by Lang for Bessel's function $J_0$ in the 60's and considered in full generality by G.~Chudnovsky in the 80's: we prove that irrational values taken at rational points by $E$-functions with rational Taylor coefficients have irrationality exponent equal to~2. This result had been obtained before by Zudilin under strong assumptions on algebraic independence of $E$-functions, satisfied by $J_0$ but not by all hypergeometric $E$-functions for instance. We remove them using a new generalization of Shidlovskii's lemma, analogous to zero estimates on commutative algebraic groups in which obstructions come from algebraic subgroups. 
\end{abstract}

\section{Introduction}

In rational approximation, a landmark result is Roth's theorem~\cite{roth}, proved in 1955: if $\xi\in\R\setminus\Q$ is an algebraic number, then 
\begin{equation}\label{eqmesure}
\forall \eps>0 \quad \exists c>0 \quad\forall (p,q)\in \mathbb Z\times \mathbb N^*\quad\quad
\left\vert \xi -\frac{p}{q} \right\vert \ge \frac{c}{q^{2+\eps}}.
\end{equation}
This measure of irrationality is optimal in the following sense: it would be false with $\eps=0$ and $c=1$, as can be proved using continued fractions or Dirichlet's pigeonhole principle. With respect to Lebesgue measure, almost all $\xi\in\R\setminus\Q$ satisfy~\eqref{eqmesure}. A folklore~belief~is that classical transcendental constants coming from analysis should satisfy~\eqref{eqmesure}; but this is known for very few of them, even amongst the most important ones. For instance,~\eqref{eqmesure}~is known for $\xi=\pi$ only in a weaker form, with 2 in the exponent of $q$ replaced with $7.1033$ (Zeilberger-Zudilin~\cite{zz}); in other words, the irrationality exponent of $\pi$ is at most $7.1033$. For $ \log(2)$ the situation is analogous: its irrationality exponent is at most $3.5746$ (Marcovecchio~\cite{marco}). In both cases, this exponent is currently best known and it comes after many successive improvements. 

The situation is different for the values of the exponential function: 
it is well known that~\eqref{eqmesure} holds with $\xi=e^r$ for any $r\in\Q\etoile$ (see the introduction of \cite{bakerexp} or \cite{Kappe}). This result opens the way to a possible generalization to $E$-functions, a class of functions defined by Siegel \cite{siegel} in 1929 to extend Diophantine properties of the exponential function. In this paper $\Qbar$ is seen as a subset of~$\C$.
\begin{defi}\label{defi1}
A power series $F(z)=\sum_{n=0}^{\infty} a_n z^n/n! \in \Qbar[[z]]$ is an $E$-function if 
\smallskip

\noindent $(i)$ $F$ is solution of a non-zero linear differential equation with coefficients in 
$\Qbar(z)$,

\smallskip

\noindent and there exists $C>0$ such that: 

 \smallskip

\noindent $(ii)$ for any $\sigma\in \textup{Gal}(\Qbar/\mathbb Q)$ and any $n\ge 0$, $\vert \sigma(a_n)\vert \leq C^{n+1}$.

\smallskip

\noindent $(iii)$ there exists a sequence of integers $(d_n)_{n\ge 0}$, with $1\le d_n \leq C^{n+1}$ for any $n\ge 0$, such that
$d_na_m$ are algebraic integers for all~$0\le m\le n$.

\end{defi}
Siegel's original definition is more general: in $(ii)$ and $(iii)$, he required bounds of the form ``for all $\eps>0$, $ \ldots \le n!^{\eps}$ for any $n\ge N(\eps)$'' instead of ``$ \ldots \le C^{n+1}$ for any $n\ge 0$''. 
$E$-functions in the sense of Definition~\ref{defi1} shall be called $E$-functions in the strict sense. Note that $(i)$ implies that the $a_n$'s all lie in a certain number field $\K$. An $E$-function is an entire function; it is transcendental unless it is a polynomial.

Amongst the simplest examples of $E$-functions, we mention $\exp(z)=\sum_{n=0}^\infty z^n/n!$ and Bessel's function $J_0(z):=\sum_{n=0}^\infty (iz/2)^{2n}/n!^2$. Both are 
specializations of the generalized hypergeometric function 
\begin{equation*} 
{}_pF_{q}
\left[
\begin{matrix}
a_1, \ldots, a_p
\\
b_1, \ldots, b_q
\end{matrix}
\,; z\right] := \sum_{n=0}^\infty \frac{(a_1)_n\cdots (a_p)_n}{(1)_n(b_1)_n\cdots (b_q)_n} z^n
\end{equation*}
where $q\ge p\ge 0$ and we define the Pochhammer symbol $(a)_n:=a(a+1)\cdots (a+n-1)$ for $n\ge 1$, $(a)_0:=1$. Siegel has proved that when $q\ge p\ge 0$, the parameters $a_j$ and $b_j$ are in $\mathbb Q$ (with the restriction that $b_j\notin \mathbb Z_{\le 0}$ so that $(b_j)_n\neq 0$ for all $n\ge 0$) and $c\in \Qbar$, then ${}_pF_{q}[a_1, \ldots, a_p;b_1,\ldots b_q; cz^{q-p+1}]$ is an $E$-function. However, the $\Qbar$-algebra generated by such hypergeometric $E$-functions is not large enough to contain all $E$-functions, as recently shown by Fres\'an-Jossen \cite{frejos}. We also point out that values of $E$-functions at algebraic points are closely related to exponential periods, see \cite{fresanlivre}. 

The Diophantine theory of the values taken at algebraic points by $E$-functions has a long history with classical results due, in chronological order, to Siegel, Shidlovskii, Nesterenko, Andr\'e and Beukers in particular. We refer to their original works \cite{siegel, Shidlovskiilivre, ns, andre, beukers} as well as to \cite{brs, firiliouville, Zudilin} for precise statements of these results and others. 

\medskip

Our purpose is to prove the following result, namely:~\eqref{eqmesure} holds for all irrational values of $E$-functions with rational Taylor coefficients at rational points (see \cite[Theorem 1]{Borisdec} for a consequence on the complexity of basis expansions).

\begin{theo}\label{theo:main} Let $f \in \mathbb Q[[z]]$ be an $E$-function and let $r\in \mathbb Q$.
Then either $f(r)\in\mathbb Q$, or 
for any $\eps>0$, there exists a constant $c>0$ depending on $f,r,\eps$ such that for any $p\in\Z$ and any $q\in \N\etoile$, we have 
\begin{equation}\label{eq:mesureirrat1}
\left\vert f(r)-\frac{p}{q} \right\vert \ge \frac{c}{q^{2+\eps}}.
\end{equation}
\end{theo}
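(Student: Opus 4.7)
The plan is to build, for each integer $N$, a pair of rational approximations $(p_N/q_N,\, p'_N/q'_N)$ to $f(r)$ with three properties: denominators bounded essentially geometrically, $\max(q_N, q'_N) \le e^{AN}$; errors decaying faster than any geometric rate, $|q_N f(r)-p_N|, |q'_N f(r)-p'_N| \le e^{-BN\log N}$; and cross determinant $\Delta_N := p_N q'_N - p'_N q_N$ non-zero. Given such pairs, Theorem~\ref{theo:main} follows from a classical pigeonhole argument: for any candidate $p/q$ with $q$ large, pick $N$ with $q_N \le q < q_{N+1}$; since the two approximations at stage $N$ differ from each other, at least one — call it $P/Q$ — differs from $p/q$, and $|p Q - P q| \ge 1$ combined with the triangle inequality yields $|f(r)-p/q| \ge c/q^2$. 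The $\eps$ in \eqref{eq:mesureirrat1} merely provides slack for this argument.

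For the construction, $f$ satisfies a linear ODE over $\Q(z)$, so its derivatives span a finite-dimensional $\Q(z)$-vector space; fix $E$-functions $f_1 = f, f_2, \ldots, f_m \in \Q[[z]]$ that span it. For each $N$ and each $i \in \{1,\ldots,m\}$, Siegel's method of undetermined coefficients furnishes polynomials $P_{i,j,N} \in \Z[z]$ $(1 \le j \le m)$ of degree $O(N)$ and height $e^{O(N)}$ such that the remainder $R_{i,N}(z) := \sum_{j} P_{i,j,N}(z) f_j(z)$ vanishes to order $\ge (m-1)N$ at the origin. Combining this vanishing with the $E$-function bounds on Taylor coefficients (via the factorial normalisation in Definition~\ref{defi1}) forces $|R_{i,N}(r)| \le e^{-cN\log N}$. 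Shidlovskii's lemma in Andr\'e's refined form ensures that the matrix $(P_{i,j,N}(r))_{i,j}$ is invertible for infinitely many $N$; Cramer's rule then expresses $f(r)=f_1(r)$ as a ratio of integers of size $e^{O(N)}$ plus an error built from cofactor expansions of the $R_{i,N}(r)$'s. This yields the first sequence $(p_N/q_N)$.

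The main obstacle lies in producing a second sequence $(p'_N/q'_N)$ with $\Delta_N \neq 0$ in the absence of any algebraic independence assumption on $f_1(r), \ldots, f_m(r)$. Zudilin's earlier result \cite{Zudilin} avoided this by assuming such independence, under which two distinct Pad\'e constructions automatically produce non-proportional approximations. My plan is to obtain the second family by a controlled perturbation — for instance by shifting an order-of-vanishing parameter, or by selecting a different row for the Cramer elimination — and then to view $(p_N, q_N, p'_N, q'_N)$ as coming from an enlarged Pad\'e-type matrix over $\Q(z)$. Proving $\Delta_N \neq 0$ reduces to a generic non-degeneracy statement for this matrix, which should follow from the structural theory of $E$-operators: identical vanishing of $\Delta_N(z)$ would force $f$ to satisfy a differential relation below its minimal order, contradicting Andr\'e's minimality theorem. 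Converting this qualitative non-vanishing into quantitative control that preserves the geometric denominator and super-geometric error bounds for the second sequence is the technical heart of the argument; I expect the combinatorics of the Pad\'e index shift to be delicate, since a naive shift typically spoils either the height bound or the vanishing order.
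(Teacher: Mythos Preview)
Your outline has the right shape—two non-proportional rational approximations plus the standard determinant trick—but both the construction and the non-degeneracy step have genuine gaps.

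The construction you sketch is the ordinary (ungraded) Hermite--Pad\'e scheme, indexed by $i\in\{1,\dots,m\}$. Your size claims are also off: for $E$-functions the $1/n!$ in the Taylor expansion forces factorials into the denominators, so after clearing them $q_N$ has size $N^{O(N)}$, not $e^{O(N)}$; the error is likewise of the form $N^{-cN}$, and the irrationality exponent one extracts is governed by the ratio of these two exponents of $N$. In the ungraded scheme that ratio only yields exponent $m+1$ (this is essentially the bound recalled from \cite{firiliouville} in the introduction). To reach exponent $2$ one needs Chudnovsky's \emph{graded} construction, indexed by multi-indices $\kappa$ with $N-1\le|\kappa|\le N$, where the relevant ratio is $\omega/\theta=2-\tfrac{m-1}{N+m-1}\to 2$ as $N\to\infty$. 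Nothing in your proposal produces this numerology.

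More seriously, your plan for $\Delta_N\neq 0$ does not work. The suggestion that $\Delta_N(z)\equiv 0$ would force a differential relation below the minimal order of $f$ has no basis: $\Delta_N$ is assembled from auxiliary polynomials manufactured by Siegel's lemma, and its vanishing carries no information about the minimal equation of $f$. This is exactly the gap in Chudnovsky's 1984 announcement, which Zudilin could close only under the hypothesis that $f,f',\dots,f^{(m-1)}$ are algebraically independent. The paper closes it by an entirely different mechanism. Within the graded construction one sets up an auxiliary system $Y'=AY$ of size $\omega$ and applies a multiplicity estimate (Theorem~\ref{thshid}) that explicitly tracks the dimension $\varrho$ of solutions with \emph{identically zero} remainder $R(Y)$. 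One then proves an independent linear-algebraic inequality (Theorem~\ref{thrat}): every $\Qbar$-rational subspace $R\subset\C^\Omega$ satisfies $\dim R\ge\tfrac{\omega}{\theta}\dim(F\cap R)$, with equality only for $R=\{0\}$ or $\C^\Omega$, where $F$ is spanned by the vectors $Z_\kappa$ encoding the values $f_j(1)$. Applying this to the image under evaluation at $1$ of the space of zero-remainder solutions forces $\varrho=0$; the multiplicity estimate then gives full rank of $(P^{[k]}_\kappa(1))$ (Proposition~\ref{propmatriceinversible}), from which the needed nonzero $2\times 2$ minor is extracted. Your ``controlled perturbation'' and appeal to Andr\'e's minimality theorem do not substitute for this argument, and I see no way to make them do so.
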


Theorem~\ref{theo:main} was announced in 1984 by Chudnovsky~\cite[p.~1926,~Theorem~1 and Corollary]{chud} for $E$-functions in $\mathbb Q[[z]]$ in Siegel's original sense and in a stronger form, namely a measure of linear independence. However, as explained in \cite[p.~245]{feldnest}, the proof contains a gap in the zero estimate. Zudilin \cite{Zudilin} succeeded in filling this gap, thereby proving Theorem~\ref{theo:main} (in an even more precise form, namely with a decreasing function of $q$ instead of $\eps$), but under additional assumptions on $f$ and $r$ and for $E$-functions in the strict sense. Precisely, let $m$ be the minimal order of a non-trivial inhomogeneous differential equation with coefficients in $\mathbb Q(z)$ satisfied by $f$. Then Zudilin assumes that either $m\le 2$ (Corollary~1 on page 583) or that $f, f', \ldots, f^{(m-1)}$ are algebraically independent over $\mathbb Q(z)$ (Corollary~1 on page 557). In both cases, he also assumes that $r$ is not a singularity of the equation. 
In the present paper, we use a different approach to zero estimates (see below), which enables us to prove Theorem~\ref{theo:main} without any additional assumption on $f$. We shall first give the proof of Theorem~\ref{theo:main} for $E$-functions in the strict sense, using a new zero estimate, various lemmas in Zudilin's paper \cite{Zudilin} and results by Andr\'e and Beukers in the theory of $E$-operators for $E$-functions in the strict sense. Then we shall explain in \S\ref{genethm1} the changes that must be made to the proof to obtain Theorem~\ref{theo:main} for $E$-functions in Siegel's sense.

\medskip

As just explained, the interest of Theorem~\ref{theo:main} is that it applies to all $E$-functions with rational coefficients, for instance to all generalized hypergeometric $E$-functions of the form 
${}_pF_{q}[a_1, \ldots, a_p;b_1,\ldots b_q; c z^{q-p+1}]$
with $q\ge p\ge 0$, $c\in \mathbb Q$ and rational parameters $a_j$ and $b_j$, whereas Zudilin applies his results to them only under some assumptions on the parameters, in particular to $J_0(z)={}_1F_2[1;1,1; (iz/2)^2]$. This enabled Zudilin to answer positively a question formulated by Lang \cite{langbsmf}, namely if Theorem~\ref{theo:main} holds for $J_0$ and all $r\in \mathbb Q^*$. The $E$-function $g(z):={}_1F_2[1/2;1/3,2/3;z^2]$ does not fall under the scope of Zudilin's results because a minimal inhomogeneous differential equation satisfied by $g$ is $9 z^2g'''(z) + 9 zg''(z) - (36 z^2 + 1)g'(z) - 36 zg(z) = 0$ of order $m=3$ while $g(z)^2-g'(z)^2/4+9z^2(4g(z)-g''(z))^2/4=1$; on the other hand, $g(r)\notin \mathbb Q$ for all $r\in \mathbb Q^*$ (see \cite[\S 7]{firimathz} for details) so that~\eqref{eq:mesureirrat1} holds for all these values. See below for non-trivial examples of transcendental $_1F_1$ series with rational parameters taking a rational value at a non-zero rational point, showing that it is not possible to exclude {\em a priori} the ``$f(r)\in \mathbb Q$'' possibility even in the hypergeometric case.

Another interesting example is 
 the generating $E$-function of Ap\'ery numbers $\mathcal{A}_{2,2}(z):=\sum_{n=0}^\infty (\sum_{k=0}^n \binom{n}{k}^2\binom{n+k}{n}^2)z^n/n!$. We proved in \cite{firiliouville} that for any $r\in \mathbb Q^*$, $ \mathcal{A}_{2,2}(r)$ has irrationality exponent at most 5 (because we are not able to check that the additional conditions of Zudilin's result are met). Using Theorem~\ref{theo:main} we deduce the optimal measure~\eqref{eq:mesureirrat1} immediately for $\mathcal{A}_{2,2}(r)$.

\medskip

 A different problem, very interesting but not studied in this paper, is to know in the setting of Theorem~\ref{theo:main} whether $f(r)$ is rational or not when $f\in \mathbb Q[[z]]$ is transcendental (if~$f$ is a polynomial, obviously $f(r)\in \mathbb Q$ for all $r\in \mathbb Q$). Beukers' linear independence theorem \cite[Corollary 1.4]{beukers} provides a sufficient condition: given $\sum_{j=0}^m p_j(z)f^{(j)}(z)=q(z)$ a non-trivial inhomogeneous equation satisfied by $f$ transcendental of minimal order $m\ge 1$ with polynomial coefficients, if $r\in \mathbb Q^*$ is such that $p_m(r)\neq 0$ then $f(r)\notin \mathbb Q$. 
 If $p_m(r)=0$, it can be decided algorithmically whether $f(r)\in \mathbb Q$ or not, see \cite{bori, brs}. Of course there are trivial examples of transcendental $E$-functions such that $f(r)\in \mathbb Q$ for some $r\in \mathbb Q^*$, for instance $(z-1)e^z$ at $z=1$. But there are also non-trivial examples such as ${}_1F_1[5;7/3; -2/3]=5/27$ and ${}_1F_1[6;-2/5; -12/5]=1309/625$; see \cite[\S 4.2]{brs} for more ``exotic'' evaluations of hypergeometric $E$-functions at rational points.

 As a consequence of \cite[Theorem 4]{ateo}, if $f$ is an $E$-function with rational Taylor coefficients such that $f(1)=e^{\alpha}$ for some $\alpha \in \Qbar$, then $\alpha \in \mathbb Q$. (The proof of this result is an immediate adaptation of that of \cite[Proposition 2]{gvalues}, which is due to the referee of that paper.) In particular, Theorem~\ref{theo:main} does not apply directly to numbers such as $e^{\sqrt{2}}$, for which~\eqref{eq:mesureirrat1} is conjectural.~(\footnote{Note however that Zudilin's theorem can be applied to the $E$-function $f(x):=e^{\sqrt{2}x}+e^{-\sqrt{2}x}\in \mathbb Q[[x]]$: the number $f(1)$ has irrationality exponent 2, so that $e^{\sqrt{2}}$ has irrationality exponent at most~4, which seems to be the best known upper bound (see \cite{Kappe} and \cite{firiliouville} for proofs of the upper bounds 12 and 8 respectively).})

It would be very interesting to generalize Theorem~\ref{theo:main} to any $E$-function $f$ in $\mathbb K[[z]]$ eva\-lua\-ted at any $\alpha\in \mathbb K^*$, where $\mathbb K$ is a given fixed number field of degree $d$ over $\mathbb Q$. To our knowledge, as a consequence of \cite[Theorem 1]{firiliouville}, the current best upper bound in this generality for the irrationality exponent of $f(\alpha)$ (when it is irrational) is $d(m+1)^d$ where $m\ge 1$ is the minimal order of a non-trivial inhomogeneous differential equation with coefficients in $\Qbar(z)$ satisfied by $f$. 
Moreover, under further assumptions on $f$ and $\alpha$ (similar to Zudilin's), this bound can be largely improved using the Lang-Galochkin transcendence measure; see \cite[Theorem 5.29 and remarks]{feldnest}. Even more specifically, Kappe \cite{Kappe} has obtained the upper bound $4d^2-2d$ for the irrationality exponent of $e^\alpha$ for all $\alpha\in \Qbar^*$ of degree $d$.

\bigskip

The proof of Theorem~\ref{theo:main} is based on Chudnovsky's construction, i.e. on graded Padé approximation. Actually Zudilin used the same construction as Chudnovsky (and he provided all necessary details); so do we. The first new feature of Theorem~\ref{theo:main} is that we do not have to exclude the case where $r$ is a singularity of a differential system, because we use (variants of) results of André and Beukers on $E$-operators and $E$-functions. The second, and main, new feature is that no assumption is needed on $f$ in Theorem~\ref{theo:main}. Actually Zudilin had to make very strong assumptions in order to prove the zero estimate (namely, that a matrix consisting of values of polynomials has maximal rank -- see Proposition~\ref{propmatriceinversible} in \S \ref{construction} below). In the present paper, we prove this result using a new generalization of Shidlovskii's lemma, based on the multiplicity estimate of \cite{gfndio2} which relies on the approach of Bertrand-Beukers \cite{BB} and Bertrand \cite{DBShid}, generalized in \cite{SFcaract}. The important feature of this multiplicity estimate is that it takes into account the possibility that some exceptional solutions of the underlying differential system have identically zero remainders. Since we make no assumption on $f$ in Theorem~\ref{theo:main}, this could {\em a priori} happen and it would make it impossible to prove that the matrix we are interested in has maximal rank. 

We carry out in \S \ref{seclemzeros} a general approach to tackle this problem, in a way reminiscent of obstruction subgroups to zero estimates in commutative algebraic groups. We call {\em non-degenerate} a setting in which there is no obstruction, and prove a generalization of Shidlovskii's lemma under this assumption. We prove in \S \ref{secpreuvematriceinversible} that this property holds in the setting of the proof of Theorem~\ref{theo:main}, using an auxiliary result proved in \S \ref{sectech}. 

\bigskip

Let us also mention that all constants in the proofs of intermediate results are effective and could in principle be computed, and consequently the same can be said of the constant $c$ in Theorem~\ref{theo:main}. 

\bigskip

The structure of this paper is as follows. We gather in \S \ref{prereq} the auxiliary results we shall need: a property of the minimal inhomogeneous differential equation of an $E$-function and a desingularization lemma, adapted from results of André and Beukers respectively. We state in \S \ref{seclemzeros} the multiplicity estimate proved in \cite{gfndio2}, and derive from it new generalizations of Shidlovskii's lemma. Then \S \ref{secpreuve} is devoted to the proof of Theorem~\ref{theo:main}, using an auxiliary result stated and proved in \S \ref{sectech}. The case of $E$-functions in Siegel's sense is dealt with in \S \ref{genethm1}.

\section{Prerequisites}\label{prereq}

To begin with, we recall that the Nilsson class at $0$ is the set of functions
$$
f(z) = \sum_{e\in E} \sum_{i\in I}  
h_{e,i}(z) z^e (\log z )^i
$$
where $E \subset \C$ and $I\subset \N$ are finite subsets, and 
the functions $h_{e,i}$ are holomorphic at $0$. If such a function $f(z)$ is not identically zero, we have $E \neq \emptyset$ and we may assume that for any $e\in E$ there exists $i\in I$ such that $h_{e,i}(0)\neq 0$; then the generalized order of $f$ at $0$, denoted by $\ord_0 f$, is the minimal real part of an element $e\in E$. When $Y:\C\to\C^q$ is a vector-valued fonction, it is said to be Nilsson at 0 if its $q$ coordinates are. 

 Thoughout the paper, we shall denote by $M_n(F)$ and $M_{n,m}(F)$ the sets of $n\times n$ matrices and of $n\times m$ matrices over a given field $F$, typically $\mathbb C$ or $\mathbb C(z)$.

\subsection{Inhomogeneous differential equation of minimal order satisfied by an $E$-function}\label{subsecinhom}

Let $f$ be a transcendental $E$-function with coefficients in a number field $\K$. Consider $f_j(z)=f^{(j-1)}(z)$ for any $j\geq 1$, and denote by $m\geq 1$ the largest integer such that $1$, $f_1(z)$, \ldots, $f_m(z)$ are linearly independent over $\K(z)$. Then $f_{m+1}(z)=f^{(m)}(z)$ is a $\K(z)$-linear combination of these functions: $f$ is solution of a inhomogeneous linear differential equation of order $m$. This equation has minimal order amongst all inhomogeneous linear differential equations satisfied by $f$. Note that we consider homogeneous equations to be special cases of inhomogeneous equations for which the constant term is equal to 0, and it may happen that an inhomogeneous equation of minimal order for one of its solutions is in fact homogeneous.

\begin{prop} \label{propinhom}
The point 0 is either a regular point, or a regular singularity, of any inhomogeneous linear differential equation of minimal order satisfied by a transcendental $E$-function.
\end{prop}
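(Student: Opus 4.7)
The plan is to reduce the statement to André's theorem, which asserts that the minimal-order \emph{homogeneous} operator $L_{\min} \in \K(z)[\partial]$ annihilating $f$ is an $E$-operator, and in particular has $0$ as a regular point or a regular singularity. I fix a minimal-order inhomogeneous equation $Lf = q$ with $L = \sum_{j=0}^m p_j(z)\partial^j \in \K(z)[\partial]$ and $q \in \K(z)$ (the pair $(L,q)$ is unique up to a $\K(z)^*$-multiple, so the conclusion does not depend on the choice). If $q = 0$, then $L$ is itself a non-zero homogeneous annihilator of $f$ of order $m$; since the $\K(z)$-independence of $1, f, \ldots, f^{(m-1)}$ entails that of $f, \ldots, f^{(m-1)}$, this $m$ equals the minimal order of a non-zero homogeneous annihilator, and $L$ is a $\K(z)^*$-multiple of $L_{\min}$, whence the conclusion follows immediately from André's theorem.

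Now assume $q \neq 0$. The key trick is to introduce the homogeneous operator
\[
\widetilde L := (q\partial - q')L \in \K(z)[\partial]
\]
of order $m+1$. The computation
\[
\widetilde L f \;=\; q(Lf)' - q'(Lf) \;=\; qq' - q'q \;=\; 0
\]
shows that $\widetilde L$ annihilates $f$. I would then rule out $\ord(L_{\min}) \leq m$: if this held, Euclidean right-division $L = B L_{\min} + R$ with $\ord(R) < \ord(L_{\min}) \leq m$ would yield $Rf = Lf = q \neq 0$, an inhomogeneous equation for $f$ of order strictly less than $m$, contradicting the minimality of $m$. Hence $\ord(L_{\min}) \geq m+1$, and since $\widetilde L$ annihilates $f$ with $\ord(\widetilde L) = m+1$, one in fact has $\ord(L_{\min}) = m+1 = \ord(\widetilde L)$, which forces $\widetilde L = c\,L_{\min}$ for some $c \in \K(z)^*$.

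To conclude, let $V_L$, $V_{\widetilde L}$ and $V_{L_{\min}}$ denote the local solution spaces at $0$ of the three homogeneous equations $Lh = 0$, $\widetilde L h = 0$ and $L_{\min}h = 0$. The identity $\widetilde L = cL_{\min}$ gives $V_{\widetilde L} = V_{L_{\min}}$, while the inclusion $V_L \subseteq V_{\widetilde L}$ is immediate from the definition of $\widetilde L$. By André's theorem every element of $V_{L_{\min}}$ belongs to the Nilsson class at $0$; therefore so does every element of $V_L$. Since $f$ itself is entire, every solution of $Lh = q$ is of the form $f + g$ with $g \in V_L$ and hence lies in the Nilsson class at $0$, which is precisely the statement that $0$ is a regular point or a regular singularity of $Lh = q$. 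The main difficulty is the identification $\ord(L_{\min}) = m+1$ in the case $q \neq 0$: without it, $\widetilde L$ would only be a left multiple of $L_{\min}$ in a more general sense, and the Nilsson-class property could not be transferred cleanly from $L_{\min}$ to $L$.
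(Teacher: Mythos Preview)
Your proof is correct and reaches the same destination by a different, arguably cleaner, route. The paper starts from the minimal \emph{homogeneous} operator of order $r$ (regular singular at $0$ by Andr\'e) and, in the nontrivial case where the minimal inhomogeneous order is $r-1$, invokes the adjoint $L^*$ together with the structural relation $\frac{d}{dz}\bigl(\sum_j p_j \partial^j\bigr)=RL$ (quoted from \cite{bori,brs}) to write down an explicit fundamental matrix of the companion system from a basis of solutions of $Ly=0$. You instead start from the minimal inhomogeneous operator $L$ and homogenize it by left-multiplying by $q\partial-q'$; your Euclidean-division argument then identifies $\widetilde L$ with a $\K(z)^*$-multiple of $L_{\min}$, so that $V_L\subset V_{\widetilde L}=V_{L_{\min}}$ is Nilsson by Andr\'e. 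This bypasses both the adjoint and the cited structure results, and handles a general right-hand side $q\in\K(z)$ without first normalizing it to a constant.

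One small point of presentation: your final sentence compresses the passage from ``all local solutions of $Lh=q$ lie in the Nilsson class at $0$'' to ``$0$ is a regular point or a regular singularity of the inhomogeneous equation''. In the paper's convention this means the companion system of size $m+1$ is regular singular at $0$; the paper makes this step explicit by exhibiting a Nilsson fundamental matrix and citing \cite[p.~81]{putsinger}. Since you have both a Nilsson particular solution $f$ and a Nilsson basis of $V_L$, the same argument goes through, but it would be worth spelling it out.
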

André has proved this result for the homogeneous linear differential equation of minimal order (and even a more precise one, see \cite[Théorème de pureté, p. 706]{andre}). In this proposition, it is understood that an inhomogeneous equation is regular or regular singular at $0$ if the companion differential system $Y'=AY$ with solution ${}^t(1,f,f',\ldots, f^{(m-1)})$ is.

\begin{proof} Let $f$ be a transcendental $E$-function solution of a non-trivial homogeneous differential equation $Ly=0$ of minimal order $r\ge 1$, where $L\in \mathbb C(z)[d/dz]$. A minimality argument implies that a non-trivial minimal inhomogeneous equation satisfied by $f$ has order $r$ or $r-1$ (see \cite[\S 4]{bori}). 

In the former case, this inhomogeneous equation is then simply equal to $L$ up a non-zero rational function factor, and the claim follows because, by Andr\'e's above-mentioned theorem, $0$ is a regular point or a regular singularity of $L$.

We now deal with the case of order $r-1$ which is more complicated. 
In this situation, there exists $R\in \mathbb C(z)\setminus \{0\}$ such that $L^*R=0$, where $L^*$ is the adjoint of $L$~(\footnote{Given a differential operator $L= \sum_{j=0}^r q_j(d/dz)^j\in \mathbb C(z)[d/dz]$, its adjoint $L^*\in \mathbb C(z)[d/dz]$ is defined by $L^*y=\sum_{j=0}^r (-1)^{j}(q_jy)^{(j)}$; see \cite[p.~38]{poole}.}) and a minimal inhomogeneous equation satisfied by $f$ is of the form
\begin{equation}\label{eq:inhomeqfc}
\sum_{j=0}^{r-1} p_j(z)f^{(j)}(z)=c
\end{equation}
where 
\begin{equation}\label{eq:inhomeqfc2}
\frac{d}{dz}\left(\sum_{j=0}^{r-1} p_j(z) \frac{d^j}{dz^j}\right) = RL.
\end{equation}
The rational function $R$ can be explicitly determined from $L$, and then Eq.~\eqref{eq:inhomeqfc2} enables to determine suitable $p_j\in \mathbb C(z)$ from $R$ and $L$ (first $p_{r-1}$, then $p_{r-2}$, etc). Finally, the constant $c$ is computed by determining the constant term in the Laurent series expansion of the left-hand side of~\eqref{eq:inhomeqfc}. See \cite[\S 2.4]{brs} for more details. By minimality of $L$, $c\neq 0$ and without loss of generality we can assume that $c=1$. 
Since $f$ is transcendental, we have $r\ge 2$.
Let $f_2, \ldots, f_{r}$ be other local solutions of $Ly=0$ at $z=0$ such that $f_1:=f, f_2, \ldots,f_{r}$ make up a $\mathbb C$-basis of the vector space of solutions of $Ly=0$: by Andr\'e's theorem, each $f_k$ is in the Nilsson class at $z=0$ 
because $0$ is at worst a regular singularity of $L$. From Eq.~\eqref{eq:inhomeqfc2}, we observe that 
$$
\frac{d}{dz}\left(\sum_{j=0}^{r-1} p_j(z)f_k^{(j)}(z)\right) = R(z)Lf_k(z)=0 \quad \mbox{for any } k \in\unr
$$
so that 
$$
\sum_{j=0}^{r-1} p_j(z)f_k^{(j)}(z) = c_k
$$
for some $c_k\in \mathbb C$ (and $c_1=1$). Up to reordering the basis $f_1, f_2, \ldots,f_{r}$ and multiplying each $f_k$ by a non-zero constant, we can and shall assume without loss of generality that $f_1, \ldots, f_s$ are such that $c_k=1$ for $k=1, \ldots, s$ and $f_{s+1}, \ldots, f_r$ are such that $c_k=0$ for $k=s+1, \ldots, r$, for some $s\in \unr$. We now write the inhomogeneous equation $\sum_{j=0}^{r-1} p_j(z)y^{(j)}(z) = 1$ satisfied by $f$ as a companion differential system $Y'=AY$ where $A\in M_r(\mathbb C(z))$, with the vector solution 
${}^t(1, f, \ldots, f^{(r-2)})$. From what precedes, it turns out that in fact 
$$
U:=\left(\begin{matrix}
1 &\cdots &1&0&\cdots &0
\\
f_1&\cdots &f_s&f_{s+1}&\cdots &f_r
\\
\vdots& \cdots & \vdots&\cdots &\vdots&\cdots
\\
f_1^{(r-2)}&\cdots&f_s^{(r-2)}&f_{s+1}^{(r-2)}&\cdots &f_r^{(r-2)}
\end{matrix}\right)
$$
is a fundamental matrix solution of $Y'=AY$. (By definition, if $s=r$, there are only 1's on the first line of $U$). Indeed, the columns of $U$ are solutions of $Y'=AY$ and they are $\mathbb C$-linearly independent because on the second line $f_1, \ldots, f_r$ are $\mathbb C$-linearly independent, so that $U$ is invertible. 
Since the entries of $U$ are in the Nilsson class at $z=0$, it follows from \cite[ 
p.~81]{putsinger} that $0$ is a regular point or a regular singular point of $Y'=AY$.
\end{proof}

\subsection{A version of Beukers' desingularization lemma}\label{subsecdesing}

Beukers' desingularization lemma \cite[Theorem 1.5]{beukers} is very useful when dealing with $E$-functions, since it enables one to get rid of all non-zero singularities of the underlying (homogeneous) differential equation. In this section we state and prove a non-homogeneous version of this result that incorporates several additional features: the coefficients lie in a fixed number field (as in \cite[Proposition 2]{firiliouville}), and two useful properties are preserved (the value at 1 of the first $E$-function, and the property that $0$ is a regular singularity). These properties will be very important in the proof of Theorem~\ref{theo:main}.

\begin{prop} \label{propbeukers}
Let $g_1$, \ldots, $g_m$ be $E$-functions with coefficients in a number field $\K$, such that $1$, $g_1$, \ldots, $g_m$ are linearly independent over $\C(z)$. Assume that the column vector $ (1, g_1, \ldots, g_m)$ is solution of a first-order differential system $Y'=SY$ with $S\in M_{m+1}(\K(z))$.

Then there exist $E$-functions $f_1$, \ldots, $f_m$ with coefficients in $\K$ such that:
\begin{itemize}
 \item The functions 1, $f_1$, \ldots, $f_m$ are linearly independent over $\C(z)$. 
 \item There exist polynomials $Q_{j,l}(z)\in \K[z]$ such that $g_j(z) = \sum_{l=0}^m Q_{j,l}(z) f_l(z)$ for any $j\in\unm$, where we let $f_0(z)=1$.
 \item The column vector $ (1, f_1, \ldots, f_m)$ is solution of a first-order differential system $Y'=\widetilde SY$ with $\widetilde S\in M_{m+1}(\K[z,1/z])$.
 \item If $g_1(1) $ is transcendental then $g_1(1)=f_1(1)$.
 \item If 0 is a regular singularity of the system $Y'=SY$, then it is also a regular singularity of the system $Y'=\widetilde SY$.
 \end{itemize}
\end{prop}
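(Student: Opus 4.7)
The plan is to apply a $\K$-coefficient version of Beukers' desingularization lemma — for instance \cite[Proposition 2]{firiliouville} — to $g_1, \ldots, g_m$, producing $E$-functions $h_1, \ldots, h_m \in \K[[z]]$ such that $1, h_1, \ldots, h_m$ are $\C(z)$-linearly independent, each $g_j$ is a $\K[z]$-linear combination of $1, h_1, \ldots, h_m$, and the column vector $(1, h_1, \ldots, h_m)$ is solution of a first-order system $Y' = \widehat S Y$ with $\widehat S \in M_{m+1}(\K[z, 1/z])$. This gives the first three bullets at once. For the regular-singularity bullet, I would argue that the desingularization procedure proceeds by a finite composition of gauge transformations $Y \mapsto T(z) Y$ with $T$ and $T^{-1}$ having entries in $\K[z, 1/(z-a_1), \ldots, 1/(z-a_r)]$ for some non-zero $a_i$'s — in particular, holomorphic and invertible at $z=0$. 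Since a first-order linear system is regular singular at $0$ if and only if all its local solutions there lie in the Nilsson class (see \cite[p.~81]{putsinger}), and since the Nilsson class is stable under multiplication by matrices regular and invertible at $0$, the regular singular character at the origin descends from $Y' = SY$ to $Y' = \widehat S Y$.

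To arrange $f_1(1) = g_1(1)$ when $g_1(1)$ is transcendental, I would write $g_1 = P_{1,0}(z) + \sum_{l=1}^m P_{1,l}(z)\, h_l(z)$ with $P_{1,l} \in \K[z]$ and evaluate at $z=1$:
\[
g_1(1) = P_{1,0}(1) + \sum_{l=1}^m P_{1,l}(1)\, h_l(1).
\]
Since $P_{1,0}(1) \in \K$ while $g_1(1) \notin \K$, at least one $P_{1,l}(1)$ with $l \geq 1$ must be non-zero; relabeling if necessary, I assume $P_{1,1}(1) \neq 0$. I then set
\[
f_1(z) := P_{1,1}(1)\, h_1(z) + P_{1,0}(z) + \sum_{l=2}^m P_{1,l}(z)\, h_l(z), \qquad f_j := h_j \ \text{ for } 2 \leq j \leq m.
\]
By construction $f_1 \in \K[[z]]$ is an $E$-function and $f_1(1) = g_1(1)$. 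The change of basis $(1, h_1, \ldots, h_m) \mapsto (1, f_1, \ldots, f_m)$ is given by a matrix $A \in M_{m+1}(\K[z])$ of determinant $P_{1,1}(1) \in \K^*$, whence $A^{-1} \in M_{m+1}(\K[z])$ as well. It follows immediately that $1, f_1, \ldots, f_m$ are still $\C(z)$-linearly independent; that the new system matrix $\widetilde S := A' A^{-1} + A \widehat S A^{-1}$ lies in $M_{m+1}(\K[z, 1/z])$; that the regular singularity at $0$ is unaffected (since $A, A^{-1}$ are regular at $0$); and that inverting the defining relation of $f_1$ gives
\[
h_1 = \bigl(f_1 - P_{1,0} - \sum_{l \geq 2} P_{1,l}\, f_l\bigr) \big/ P_{1,1}(1),
\]
a $\K[z]$-linear combination of $1, f_1, \ldots, f_m$, so that each $g_j$ is of the required form.

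The main technical point is the regular-singularity claim: one must be sure that Beukers' desingularization can be implemented using only gauge matrices regular at the origin, a verification that amounts to inspecting the algorithmic content of the lemma (iterated shears at non-zero apparent singularities). Once this is granted, the linear-algebra modification above — whose decisive feature is that the transcendence of $g_1(1)$ forces some coefficient $P_{1,l}(1)$ with $l \geq 1$ to be non-zero, so that a judicious combination can absorb the correct value at $1$ while keeping the change of basis polynomially invertible — completes the argument without affecting any of the four other bullets.
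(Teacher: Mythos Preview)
Your proposal is correct and follows essentially the same route as the paper: apply the $\K$-coefficient Beukers desingularization (\cite[Proposition~2]{firiliouville}), note that each step is a gauge transformation regular and invertible at $0$ so the regular-singularity property at the origin is preserved, and then make a final invertible change of basis to force $f_1(1)=g_1(1)$ using the transcendence of $g_1(1)$. The only difference is cosmetic: the paper replaces $f_{l_0}$ by the constant-coefficient combination $\sum_{l=0}^m Q_{1,l}(1)f_l$, whereas you use a polynomial-coefficient combination whose determinant is still the nonzero constant $P_{1,1}(1)$; both work for the same reason.
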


\begin{proof}
It follows closely that of Beukers, or more precisely the version over a 
number field $\K$ given in \cite[Proposition 2]{firiliouville} (see also \cite{brs}). This approach consists in finitely many steps. At each step, one obtains a $\K$-linear combination of $1$, $g_1$, \ldots, $g_m$ that vanishes at some non-zero algebraic point $\alpha$; then one replaces one of the functions by this linear combination, and divides by $z-\alpha$ if $\alpha\in\K $ (by the minimal polynomial of $\alpha$ over $\K$ in the general case). The point is that the above-mentioned linear combination is never just the function 1, since 1 does not vanish at $\alpha$. Therefore the function 1 can be preserved at each step. Moreover it is clear from the proof that 0 being a regular singularity holds throughout the procedure. This proves Proposition~\ref{propbeukers}, except for the property $g_1(1)=f_1(1)$. However, if $g_1(1) = \sum_{l=0}^m Q_{1,l}(1) f_l(1)$ is transcendental then there exists $l_0\in\unm$ such that $ Q_{1,l_0}(1) \neq 0$. Replacing $f_{l_0}(z)$ with $\sum_{l=0}^m Q_{1,l}(1) f_l(z)$ does not change the other properties of the functions $f_1$, \ldots, $f_m$, and provides $g_1(1)=f_{l_0}(1)$. Up to a permutation of $f_1$, \ldots, $f_m$ this concludes the proof of Proposition~\ref{propbeukers}.
\end{proof}

\section{A new version of of Shidlovskii's lemma} \label{seclemzeros}

\newcommand{\cdeux}{c_2}
\newcommand{\ctrois}{c_3}
\newcommand{\calRP}{\calR[P_1,\ldots,P_q]}

In this section we state and prove a new version of Shidlovskii's lemma, that we shall use in the proof of Theorem~\ref{theo:main} (see \S \ref{secpreuvematriceinversible}). The original result of Shidlovskii~\cite[Chapter~3, \S 7, Lemma 10]{Shidlovskiilivre} has been generalized, with a different proof, by Bertrand-Beukers \cite{BB} using differential Galois theory and a generalization of Fuchs' relation on exponents of a differential equation. Further generalizations allow vanishing conditions involving several solutions at several points (see \cite{DBShid, SFcaract, gfndio2}). 

In the present paper we are interested in the case where the remainders of many solutions $Y$ of a differential system $Y'=AY$ vanish at $0$ with high multiplicity. The difficult point is that some of these remainders may be identically zero. We have tackled this problem in \cite{gfndio2}, but this result (stated in \S \ref{subseclemzeroquebec} below) is not ready-to-use because it involves a condition on the polynomials $P_i$, on which no information is available in general. In this section we state and prove two versions of Shidlovskii's lemma that can be used much more easily in practice. The first one (see \S \ref{subsecobstru}) involves obstructions, and is reminiscent of zero estimates in commutative algebraic groups. The second one (see \S \ref{subsecnondeg}) deals with the case where no such obstruction exists; we call {\em non-degenerate} such a situation. We shall prove in \S \ref{secpreuvematriceinversible} that the setting of the proof of Theorem~\ref{theo:main} is non-degenerate over $\Qbar$, using an auxiliary result proved in~\ref{sectech}.

\subsection{Setting and known results} \label{subseclemzeroquebec}

Let $q\geq 1$, $A \in M_q(\C(z))$, $n\geq 0$, and $P_1,\ldots,P_q\in\C[z]$ be such that 
 $\deg P_i \leq n$ for any $i$. 
We identify tuples in $\C^q$ with column matrices in $M_{q,1}(\C)$.
 Then with any solution $Y = (y_1,\ldots,y_q)$ of the differential system $Y'=AY$ is associated a remainder $R(Y)$ defined by 
$$
R(Y)(z) = \sum_{i=1}^q P_i(z) y_i(z).$$
The derivatives of such a remainder can be written as \cite[Chapter 3, \S 4]{Shidlovskiilivre}
\begin{equation} \label{eqderiR}
R(Y)^{(k-1)}(z) = \sum_{i=1}^q P_{k,i}(z) y_i(z),
\end{equation}
where 
 the rational functions $P_{k,i}\in\C(z)$ are defined for $k \geq 1$ and $1 \leq i \leq q$ by
\begin{equation} \label{eqdefpki}
\left( \begin{array}{c} P_{k,1} \\ \vdots \\ P_{k,q} \end{array}\right) = \left(\frac{\dd}{\dd z} + \, \tra A\right)^{k-1} 
 \left( \begin{array}{c} P_{1} \\ \vdots \\ P_{q} \end{array}\right).
 \end{equation}
Obviously the poles of the $P_{k,i}$ of $M$ are amongst the singularities of the differential system $Y'=AY$.

\medskip

The main new feature of the multiplicity estimate proved in \cite{gfndio2} is that it takes into account the possibility that $R(Y)(z)$ is identically zero for some non-zero solutions $Y(z)$ of the differential system $Y'=AY$. To state this result (in the special case we are interested in), we denote by $\varrho\geq 0 $ the dimension of the $\C$-vector space of solutions $Y$ such that $R(Y)(z)$ is identically zero.

\begin{thmx}
\label{thshid}
There exists a positive constant $\cstun$, which depends only on $A$, with the following property. 
Let 
$(Y_j)_{j\in J}$ be a family of solutions of $Y'=AY$ such that the functions $R(Y_j)$, $j\in J $, are $\C$-linearly independent and 
belong to the Nilsson class at $0$.
Assume that 
\begin{equation} \label{eqhypdetnn}
 \sum_{j\in J } \ord_0(R(Y_j)) \geq (n+1) ( q - \varrho) -\tau 
\end{equation}
 for some $\tau\in\Z $.
Then:
 \begin{itemize}
 \item[$(i)$] We have $\tau \geq -\cstun$.
 \item[$(ii)$] If $0\leq \tau \leq n - \cstun$ then for any $\alpha\in\C\etoile$ which is not a singularity of the differential system $Y'=AY$, the matrix $(P_{k,i}(\alpha))_{1\leq i \leq q, 1\leq k < \tau + \cstun} \in M_{q,\tau + \cstun-1}(\C)$ has rank at least $q-\varrho$.
 \end{itemize}
 \end{thmx}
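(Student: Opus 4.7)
The plan is to follow the Bertrand--Beukers--Bertrand approach to Shidlovskii-type multiplicity estimates, adapted as in \cite{SFcaract} to account for the subspace of solutions with identically zero remainder. The central object is the matrix $M(z) = (P_{k,i}(z))_{1\leq i,k\leq q}$. Since $R$ is a $\mathbb{C}$-linear map from the $q$-dimensional solution space to $\mathbb{C}\{z\}[\log z][z^e]$ (the Nilsson-class functions) whose kernel has dimension $\varrho$, the image has dimension $q-\varrho$, so exactly $q-\varrho$ of the columns of $M(z)$ are $\mathbb{C}(z)$-linearly independent, and the $\mathbb{C}(z)$-rank of $M(z)$ is $q-\varrho$.

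First I would bound the available $|J|$ by $q-\varrho$ (otherwise the $R(Y_j)$ could not be $\mathbb{C}$-linearly independent). Pick a fundamental system of solutions and consider the Wronskian-type determinant $W(z) = \det\bigl(R(Y_j)^{(k_l-1)}(z)\bigr)_{j\in J,\; l}$ for a well-chosen set of $q-\varrho$ indices $k_1<\cdots<k_{q-\varrho}$. Using \eqref{eqderiR} and the Cauchy--Binet formula, $W$ factors as $\det(P_{k_l,i})\cdot\det(y_{j,i})$ summed over $(q-\varrho)$-subsets $I$ of $\{1,\dots,q\}$; the linear independence of the $R(Y_j)$'s together with the fact that $M(z)$ has rank $q-\varrho$ ensures $W\not\equiv 0$ for an appropriate choice of the $k_l$'s. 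On one hand, the Nilsson-class hypothesis gives
$$\ord_0 W \;\geq\; \sum_{j\in J} \ord_0(R(Y_j)) - \binom{q-\varrho}{2} - C_0,$$
where $C_0$ depends only on $A$ (this is the content of the standard Nilsson-class lower bound on the order of a Wronskian). On the other hand, each $(q-\varrho)\times(q-\varrho)$ minor of $M(z)$ is a polynomial in the entries of $A$ and its derivatives applied to the $P_i$'s, hence has degree $\leq (n+1)(q-\varrho) + C_1$ for some $C_1$ depending only on $A$ (accounting for the shifts coming from $\tra A$ acting at each derivation step).

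Comparing these two bounds with the hypothesis \eqref{eqhypdetnn} yields
$$(n+1)(q-\varrho)-\tau \;\leq\; \ord_0 W + \binom{q-\varrho}{2}+C_0 \;\leq\; (n+1)(q-\varrho)+C_1+\binom{q-\varrho}{2}+C_0,$$
which gives $\tau \geq -c_1$ for a constant $c_1$ depending only on $A$; this proves the first bullet. For the second bullet, assume by contradiction that at some regular point $\alpha\in\mathbb{C}^*$ of the system the truncated matrix $\bigl(P_{k,i}(\alpha)\bigr)_{1\leq i\leq q,\; 1\leq k<\tau+c_1}$ has rank $<q-\varrho$. Then every $(q-\varrho)\times(q-\varrho)$ minor of $M(z)$ built from columns $k<\tau+c_1$ vanishes at $\alpha$. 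Since $\alpha$ is not a singularity of $Y'=AY$, these minors are regular at $\alpha$; factoring out $(z-\alpha)$ from the relevant minor that entered the Wronskian computation improves the degree bound by at least $1$, which can be iterated. Following the argument of \cite{gfndio2, SFcaract}, by enlarging the allowed range of $k$ up to $\tau+c_1-1$ one gains $\tau+c_1-1-(q-\varrho)$ additional degree units, sharpening the inequality above to $\tau\geq \tau+c_1-1-(q-\varrho)+O(1)$, which contradicts the assumption $\tau\leq n-c_1$ provided $c_1$ is chosen large enough in terms of $A$.

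The main obstacle is the handling of the $\varrho$ exceptional solutions: the classical Shidlovskii-type lemma corresponds to $\varrho=0$, so the point is to quotient by the kernel of $R$ correctly, identify a rank-$(q-\varrho)$ submatrix of $M(z)$ whose minor appears with a nonzero coefficient in $W$, and then run the Wronskian/degree comparison on this reduced system. The rest is careful bookkeeping of the constants $C_0,C_1$ in terms of the poles and degrees of $A$, which is what allows the constant $c_1$ to be chosen uniformly in $n$, $\tau$, and the choice of family $(Y_j)_{j\in J}$.
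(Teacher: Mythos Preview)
The paper does not prove Theorem~\ref{thshid}: it is simply quoted as a special case of \cite[Theorem~3]{gfndio2} (see the opening sentence of \S\ref{subseclemzeros}), so there is no in-paper proof to compare your argument against.

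That said, your outline does follow the approach of \cite{gfndio2}, namely the Bertrand--Beukers Wronskian method as generalised in \cite{SFcaract} to allow $\varrho>0$. The first bullet goes essentially as you say, with one caveat you slide past: the equality $\operatorname{rank}_{\C(z)}M(z)=q-\varrho$ does not follow merely from $\dim_{\C}\ker R=\varrho$. The relations $\sum_i P_{k,i}(z)y_i(z)=0$ furnished by $Y\in\ker R$ have \emph{solution} coefficients $y_i(z)$, not rational ones, so they do not directly cut the $\C(z)$-rank. What is needed is that $\C$-linearly independent Nilsson-class functions have nonvanishing Wronskian; then the matrix $D=\tra\Phi\, M$ of derivatives of the $R(Y_j)$ has rank exactly $q-\varrho$, and so does $M$ since the fundamental matrix $\Phi$ is invertible.

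Your treatment of the second bullet, however, is not a proof. The displayed conclusion ``$\tau\geq \tau+c_1-1-(q-\varrho)+O(1)$'' is circular (both sides contain $\tau$, so it collapses to an inequality among constants) and no contradiction with $\tau\le n-c_1$ is actually derived. The genuine argument does not ``factor out $(z-\alpha)$ and iterate''. Instead one selects, among the nonzero $(q-\varrho)$-minors of the extended matrix $(P_{k,i}(z))_{i,k}$, one whose column set $k_1<\cdots<k_{q-\varrho}$ is suitably minimal and whose value at $\alpha$ is nonzero; comparing its order at $0$ (bounded below via \eqref{eqhypdetnn} and the Wronskian lower bound) with its degree (which grows like $(q-\varrho)n$ plus a term linear in $k_{q-\varrho}$) forces $k_{q-\varrho}<\tau+c_1$. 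This is what yields $\operatorname{rank}(P_{k,i}(\alpha))_{k<\tau+c_1}=q-\varrho$. Your sketch does not contain this column-selection step, which is where the bound $\tau\le n-c_1$ actually enters.
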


In this setting, under the assumptions of $(ii)$, the matrix $(P_{k,i}(\alpha))$ has rank equal to $q-\varrho$. Indeed, there exist $\varrho$ $\C$-linearly independent solutions $Y$ such that $R(Y)$ is identically zero. For each of them, we have $\sum_{i=1}^q P_{k,i}(\alpha)y_i(\alpha)= R(Y)^{k-1}(\alpha)=0$ for any $k\geq 1$: since $\alpha$ is not a singularity, this provides $\varrho$ linearly independent linear relations between the rows of the matrix $(P_{k,i}(\alpha))$.

We remark that Theorem~\ref{thshid} would not hold if the linear independence assumption were on the $Y_j$ rather than on the $R(Y_j)$. Indeed, for instance if $R(Y_j) $ were identically zero for some $j\in J$, then $ \ord_0(R(Y_j))$ would be infinite and Eq.~\eqref{eqhypdetnn} would hold for any $\tau\in\Z$. Moreover, solutions $Y$ such that $R(Y)$ is identically zero are taken advantage of in Theorem~\ref{thshid}: each such solution (in a linearly independent family) provides the same benefit in Eq.~\eqref{eqhypdetnn} as an additional function $Y_j$ such that $R(Y_j)$ would vanish to order $n+1$. 

\begin{Remark}\label{remeff0} Following the proof \cite{gfndio2} of Theorem~\ref{thshid} and using the results of \cite{BCY} shows that $\cstun$ can be effectively computed in terms of $A$.
\end{Remark}

\subsection{A multiplicity estimate with obstruction} \label{subsecobstru}

In this section we state and prove a new multiplicity estimate, namely Theorem~\ref{thobstru}, reminiscent of those on commutative algebraic groups (see below).

Let $q\geq 1$, $A \in M_q(\C(z))$, $n\geq 0$, and $P_1,\ldots,P_q\in\C[z]$ be such that 
 $\deg P_i \leq n$ for any $i$. 
We denote by $\calS$ the set of solutions of the differential system $Y'=AY$, and by $\calR$ the set of all $Y\in\calS$ whose remainder $R(Y)(z) = \sum_{i=1}^q P_i(z) y_i(z)$ is identically zero.

\begin{theo}
\label{thobstru}
There exists a positive constant $\cstun$, which depends only on $A$, with the following property. 
Let $\cdeux, T\geq 0$ and $\calF$ be a subspace of $\calS$ such that for any $Y\in \calF$, the associated remainder $R(Y)$ belongs 
 to the Nilsson class at $0$ and vanishes at $0$ with order at least $T$. Assume that 
 \begin{equation} \label{eqhypobstru}
(n+1)(q-\dim \calR) \leq T (\dim\calF - \dim(\calF\cap\calR))+\cdeux.
\end{equation}
Then:
 \begin{itemize}
 \item We have $\cdeux \geq -\cstun$.
 \item If $\cdeux \leq n - \cstun$ then for any $\alpha\in\C\etoile$ which is not a singularity of the differential system $Y'=AY$, the matrix $(P_{k,i}(\alpha))_{1\leq i \leq q, 1\leq k <\cstun+\cdeux} \in M_{q, \cstun+\cdeux-1}(\C)$ has rank equal to $q- \dim\calR$.
 \end{itemize}
 \end{theo}

In this result the constant $\cstun$ is the same as in Theorem~\ref{thshid}. 
We notice that Eq.~\eqref{eqhypobstru} can be written as 
$$
(n+1)\dim (\calS / \calR) \leq T \, \dim\calF/ (\calF\cap\calR)+\cdeux .
$$
It is similar to the inequalities that appear in zero estimates on commutative algebraic groups (see for instance \cite{MW, Pph, Wu}), where $\calS$ corresponds to an algebraic group $G$, $\calF$ to an analytic subgroup $W$ (along which derivatives are taken) or a discrete subgroup $\Gamma$ (i.e. points at which values are taken), and $\calR$ to an algebraic subgroup of $G$. In this setting an algebraic subgroup of $G$ constitutes an obstruction if it contains ``more elements'' of $\Gamma$ or $W$ than it should (in terms of its dimension).

\bigskip

\begin{proof}[Proof of Theorem~\ref{thobstru}] We write $\varrho=\dim\calR$, $\chi = \dim(\calF\cap\calR)$, and $\theta=\dim\calF$. There exists a basis $(Y_1,\ldots, Y_\theta)$ of $\calF$ such that $(Y_1,\ldots, Y_\chi)$ is a basis of $ \calF\cap\calR$. Then we claim that $R(Y_{\chi+1}),\ldots, R(Y_\theta)$ are linearly independent over $\C$.

Indeed, let $\lambda_{\chi+1},\ldots, \lambda_\theta\in\C$ be such that $ \lambda_{\chi+1} R(Y_{\chi+1}) + \ldots + \lambda_\theta R(Y_\theta)$ is identically zero. Then $Y = \lambda_{\chi+1} Y_{\chi+1} + \ldots + \lambda_\theta Y_\theta$ belongs to $ \calF\cap\calR$: it can be written as 
$\lambda_1 Y_1 + \ldots + \lambda_{\chi} Y_\chi$ for some $\lambda_{1},\ldots, \lambda_\chi\in\C$. This implies $\lambda_1 Y_1 + \ldots + \lambda_{\chi} Y_\chi - \lambda_{\chi+1} Y_{\chi+1} - \ldots - \lambda_\theta Y_\theta = 0$ so that all $\lambda_j $ are zero: the claim is proved.

We let 
$$
\tau = \max(0, (n+1)(q-\varrho) - T(\theta-\chi) )
$$
so that Eq.~\eqref{eqhypobstru} yields $\tau\leq \cdeux$ since $\cdeux\geq 0$. Since $\ord_0(R(Y_j)) \geq T$ for any $j$ we have 
$$\sum_{j=\chi+1}^\theta \ord_0(R(Y_j)) \geq (\theta-\chi) T \geq (n+1)(q-\varrho) -\tau,$$
and the functions $R(Y_j)$, $\chi+1\leq j \leq \theta$, are linearly independent over $\C$. Therefore Theorem~\ref{thshid} yields $\cdeux\geq \tau \geq -\cstun$ where $\cstun$ is the constant in that theorem, and if $ \cdeux\leq n - \cstun$ then for any $\alpha\in\C\etoile$ which is not a singularity of the differential system $Y'=AY$, the matrix $(P_{k,i}(\alpha))_{1\leq i \leq q, 1\leq k < \tau + \cstun} \in M_{q,\tau + \cstun-1}(\C)$ has rank at least $q-\varrho$. Since $\tau\leq \cdeux$ the same conclusion holds with $\cdeux$ instead of $\tau$. At last the rank is equal to $q-\varrho$ using the remark that follows Theorem~\ref{thshid}.
\end{proof}

\subsection{The nondegenerate setting} \label{subsecnondeg}

Let $\K $ be a subfield of $\C$, and $A \in M_q(\K(z))$ with $q\geq 1$. We denote by $\calS$ the $q$-dimensional vector space (over $\C$) of solutions of the differential system $Y'=AY$. We consider a non-zero subspace $\calF$ of $\calS$.

For $P_1,\ldots,P_q\in\C[z]$ we denote by $\calRP$ the space of solutions $Y = (y_1,\ldots,y_q)\in\calS$ such that $R(Y)$ is identically zero, where $R(Y)(z) = \sum_{i=1}^q P_i(z) y_i(z)$. This subspace of $\calS$ is said to be {\em proper} if it is distinct from $\{0\}$ and from $\calS$.

\begin{defi} The subspace $\calF$ is said to be {\em nondegenerate over $\K$} if for any $P_1,\ldots,P_q\in\K[z]$ such that $\calRP$ is proper, we have
 \begin{equation} \label{eqstrict}
\frac{\dim(\calF\cap\calRP)}{\dim\calRP} < \frac{\dim \calF }{\dim \calS}.
\end{equation}
\end{defi}

In loose terms, this means that points of $\calF$ do not accumulate in any such $\calRP$. 
We point out that the coefficients of $P_1,\ldots,P_q$ are assumed to belong to $\K$, but no such rationality assumption is made on $\calF$. 
We refer to \cite[\S 1.3]{Asterisque} for general properties of distribution of discrete subgroups with respect to algebraic subgroups, in which similar inequalities occur.

Since $\dim\calS=q$, an equivalent formulation of Eq.~\eqref{eqstrict} is 
 \begin{equation} \label{eqformulationeq}
\calF \not\subset \calRP \quad \mbox{ and } \quad  \frac{q}{\dim \calF} > \frac{q-\dim\calRP}{\dim\calF-\dim(\calF\cap\calRP)}.
\end{equation}

\bigskip

The point in asking $\calF$ to be nondegenerate over $\K$ is that we obtain the following version of Shidlovskii's lemma, in which no assumption involves $\calRP$.

\begin{theo}
\label{thnondeg}
Assume that $\calF$ is nondegenerate over $\K$. 

Let $0\leq \eps \leq \frac1{q+1}$, $w\in\R_{\geq 0}$, and $n\in\N$; assume that $n$ is sufficiently large in terms of $w$ and the differential system $Y'=AY$. Let $P_1,\ldots,P_q\in\K_n[z]$ be polynomials of degree at most $n$, not all zero, with coefficients in $\K$, such that for any $Y = (y_1,\ldots,y_q)\in\calF$ the remainder $R(Y)(z) = \sum_{i=1}^q P_i(z) y_i(z)$ belongs 
 to the Nilsson class at $0$ and vanishes at $0$ with order at least $ \frac{(q-\eps)n}{\dim\calF} - w$. 
 
 Then $\calRP=\{0\}$ and for any $\alpha\in\C\etoile$ which is not a singularity of the differential system $Y'=AY$, the matrix $(P_{k,i}(\alpha))_{1\leq i \leq q, 1\leq k < \cstun + (w+1)q + n\eps} \in M_{q, \cstun+ (w+1)q + n\eps-1}(\C)$ has rank $q $ (where $\cstun$ is a constant that depends only on $A$). 
 \end{theo}

Here the constant $\cstun$ is the same as in Theorems~\ref{thshid} and~\ref{thobstru}.

An important point here is that $\calRP=\{0\}$: it turns out that $0$ is the only solution $Y\in\calS$ such that $R(Y)$ is identically zero. This is necessary for the matrix $(P_{k,i}(\alpha))$ to have maximal rank equal to $q$, since any such solution $Y$ yields a linear relation between its rows (see the remark after Theorem~\ref{thshid}).

\bigskip

To apply Theorem~\ref{thnondeg} one needs to prove that $\calF$ is non-degenerate over $\K$. In some settings this can be done using differential Galois theory, because all subspaces $\calRP$ are stable under the differential Galois group of $Y'=AY$. In the present paper we shall proceed in a different way (see \S \ref{secpreuvematriceinversible}). Indeed we shall prove (see Theorem~\ref{thrat} in \S \ref{sectech})
that Eq.~\eqref{eqstrict} holds for any proper subspace of $ \C^q$ defined over $\Qbar$, up to identifying a solution and its value at $\alpha=1$, and apply the following lemma. 

\begin{lem}\label{lemrrationnel}
 Let $\alpha\in\K\etoile$, not a singularity of the differential system $Y'=AY$. Then for any $P_1,\ldots,P_q\in\K[z]$, the image of $\calRP$ under the isomorphism $\calS \to \C^q$, $Y\mapsto Y(\alpha)$ is a subspace of $ \C^q$ defined over $\K$.
\end{lem}

By {\em defined over $\K$}, or {\em rational over $\K$}, we mean that this subspace of $\C^q$ has a $\mathbb C$-basis consisting of vectors in $\K^q$. This is equivalent to the existence of a system of linear equations with coefficients in $\K$ that defines it.

\medskip

\begin{proof}[Proof of Lemma~\ref{lemrrationnel}] Let $Y\in \calS$. Then $Y(\alpha)$ belongs to the image of $\calRP$ if, and only if, $Y\in\calRP$, that is $\sum_{i=1}^q P_i(z) y_i(z)$ is identically zero. This is equivalent to the fact that all derivatives of this function vanish at $\alpha$, i.e. $\sum_{i=1}^q P_{k,i}(\alpha) y_i(\alpha) = 0$ for any $k\geq 1$. This is a family of linear forms in the coordinates of $Y(\alpha)$, with coefficients in $\K$ (since $\alpha\in\K$, $A \in M_q(\K(z))$ and $P_1,\ldots,P_q\in\K[z]$). This concludes the proof of Lemma~\ref{lemrrationnel}.
\end{proof}

\bigskip

\begin{proof}[Proof of Theorem~\ref{thnondeg}] We shall apply Theorem~\ref{thobstru} twice with $T= \frac{(q-\eps)n}{\dim\calF} -w$; for simplicity we write $\calR = \calRP$.

Since $P_1,\ldots,P_q$ are not all zero, there exist $i_0\in\{1,\ldots,q\}$ and $z_0\in\C$ (outside the singularities of $Y'=AY$) such that $P_{i_0}(z_0)\neq 0$. Then there exist $v_1,\ldots,v_q\in\C$ such that $\sum_{i=1}^q P_i(z_0)v_i\neq 0$, and a solution $Y$ of $Y'=AY$ such that $y_i(z_0)=v_i$ for any $i$. Then the corresponding remainder $R(Y)$ does not vanish at $z_0$, so that $Y\not\in\calR$ and $\calR\neq \calS$.

Let us assume that $\calR\neq \{0\}$, so that $\calR$ is a proper subspace of $\calS$. Then Eq.~\eqref{eqformulationeq} yields 
$$
\calF \not\subset \calR \quad \mbox{ and } \quad
\frac{q}{\dim \calF} \geq \frac{q-\dim\calR}{\dim\calF-\dim(\calF\cap\calR)}+
 \frac1{(\dim\calF)^2}
$$
since the difference between the two fractions of Eq.~\eqref{eqformulationeq} is a positive rational number with denominator at most $(\dim\calF)^2$.
This implies
$$
\frac{q-\eps}{\dim \calF} \geq\frac{q-\dim\calR}{\dim\calF-\dim(\calF\cap\calR)}+
\frac1{q(q+1) \dim\calF }
$$
since $\frac1{\dim \calF}-\eps \geq \frac1q - \frac1{q+1}=\frac1{q(q+1)}$, and therefore 
$$ \frac{q-\eps}{\dim \calF} ( \dim \calF - \dim(\calF\cap\calR)) \geq q - \dim \calR + \frac{\dim \calF - \dim(\calF\cap\calR)}{ q(q+1) \dim\calF}
 \geq q - \dim \calR + \frac1{q^2(q+1)}$$
 using the fact that $\calF \not\subset \calR$. 
Since $n$ is sufficiently large in terms of $w$ and the differential system $Y'=AY$ we obtain
$$
(n+1)(q-\dim \calR) \leq \Big( \frac{(q-\eps)n}{\dim\calF} - w \Big) (\dim\calF - \dim(\calF\cap\calR)) -\cstun - 1
$$
where $\cstun$ is the constant in Theorem~\ref{thobstru}. This is a contradiction with the first conclusion of Theorem~\ref{thobstru}, so that actually $\calR = \{0\}$.

Therefore letting $\cdeux = q +w\dim\calF + n \eps $ we have 
\begin{eqnarray*}
 (n+1)(q-\dim \calR) = (n+1) q &\leq& \Big( \frac{(q-\eps)n}{\dim\calF} - w\Big) \dim\calF+ \cdeux \\
 &=& \Big( \frac{(q-\eps)n}{\dim\calF} - w \Big) (\dim\calF - \dim(\calF\cap\calR))+\cdeux.
 \end{eqnarray*}
Since $n$ is sufficiently large in terms of $w$ and the differential system $Y'=AY$ we have $\cdeux \leq n - \cstun$: Theorem~\ref{thobstru} shows that for any $\alpha\in\C\etoile$ which is not a singularity of the differential system $Y'=AY$, the matrix $(P_{k,i}(\alpha))_{1\leq i \leq q, 1\leq k <\cstun+q +w\dim\calF + n \eps } \in M_{q, \cstun+q +w\dim\calF + n \eps -1}(\C)$ has rank $q$.  
This concludes the proof of Theorem~\ref{thnondeg}.
\end{proof}

\section{Proof of the main result} \label{secpreuve}

This section is devoted to the proof of Theorem~\ref{theo:main}, using an auxiliary result that will proved later in \S \ref{sectech}.

In \S \ref{secnotations} we introduce the notation and setting of the proof, applying the results of \S \S \ref{subsecinhom} and~\ref{subsecdesing}. Then in \S \ref{construction} we give construction and properties of graded Padé approximants, including the zero estimate (namely Proposition~\ref{propmatriceinversible}). 
Admitting this result, we prove 
Theorem~\ref{theo:main} stated in the introduction in \S \ref{proofthm1}, and its generalization to an $E$-function in Siegel's orginal sense with coefficients in $\mathbb Q$, in \S \ref{genethm1}.

The rest of the paper is devoted to the proof of Proposition~\ref{propmatriceinversible}, carried out in \S \ref{secpreuvematriceinversible} using the differential system considered in \S \ref{secsystemediff} and the auxiliary result proved in \S \ref{sectech}.

\subsection{Setting, notations and parameters}\label{secnotations}

In this section we describe the setting of the proof of Theorem~\ref{theo:main}. We use the results of \S \S \ref{subsecinhom} and~\ref{subsecdesing} to obtain a family of linearly independent $E$-functions, solution of a first order differential system with no non-zero finite singularity. Then we introduce (essentially) the same notation and parameters as in Zudilin's proof.

\medskip

To prove Theorem~\ref{theo:main} we start with an $E$-function $g(z)$ with coefficients in $\Q$, and $r\in\Q\etoile$. We assume that $g(r)$ is irrational; then $g(r)$ is transcendental by \cite[Theorem 4]{ateo}. Considering $g(rz)$ instead of $g$, we may assume that $r=1$. 

As in \S \ref{subsecinhom} we consider a (possibly) inhomogeneous linear differential equation of minimal order satisfied by the transcendental $E$-function $g$. Proposition~\ref{propinhom} asserts that 0 is (at worst) a regular singularity of this equation. Viewing this equation as a differential system of order one satisfied by the column vector $ (1, g ,g', \ldots, g^{(m-1)})$, we apply Proposition~\ref{propbeukers} and obtain $E$-functions 1, $f_1$, \ldots, $f_m$ with coefficients in $\Q$, linearly independent over $\C(z)$, such that $f_1(1)=g(1)$ is the number we are interested in to prove Theorem~\ref{theo:main}. The important point is that $ (f_1 , \ldots, f_m)$ is a solution of a first-order inhomogeneous differential system
\begin{equation} \label{eqsysdiffinitial}
 f'_l(z) = S_{l,0} (z) + \sum_{j=1}^m S_{l,j}(z) f_j(z) \mbox{ for any } l\in\unm 
\end{equation} 
with $ S_{l,j}(z) \in \Q[z,1/z]$: the only possible finite singularity of this system is zero, and it is regular (in case it is a singularity).

\medskip

We consider multi-indices $ \capa\in\N^m$ and sums over such multi-indices. In such a sum, whenever an index $\capa$ belongs to $\Z^m$ but not to $\N^m$ (i.e., has at least one negative component), the term corresponding to this index will be considered as 0. For $\capa=(\capa_1,\ldots,\capa_m)\in\N^m$, we write $|\capa| = \capa_1+\ldots+\capa_m$.

We denote by $(e_1,\ldots,e_m)$ the canonical basis of $\Z^m$, i.e. $e_i=(0,\ldots,0,1,0,\ldots,0)$ where the $i$-th coordinate is equal to 1.
We let 
$$\Omega = \{\capa\in\N^m, \, \ineg\}, 
\quad
\Theta = \{\capa\in\N^m, \, |\capa| = N\},
$$
and 
$$
\omega=\Card\, \Omega = \binom{N+m-2}{m-1} + \binom{N+m-1}{m-1} , 
\quad
\theta = \Card \, \Theta = \binom{N+m-1}{m-1}.
$$
We remark, for future reference, that 
$$\frac{\omega}{\theta} = 2 - \frac{m-1}{N+m-1} = 1+ \frac{N}{N+m-1}.$$
We also fix a bijective map $\unomega\to\Omega$ so that indices $\capa\in\Omega $ can be seen as integers between 1 and $\omega$; for instance a family $(x_\omega)_{\omega\in\Omega}\in\C^\Omega$ can also be seen as a tuple in $\C^\omega$, or as a column matrix in $M_{\omega,1}(\C)$.

\bigskip

Let us introduce now the parameters that will be used in the construction.
Let $N$ be sufficiently large with respect to $f_1$, \ldots, $f_m$, and let $\ewa>0$ be a real number such that 
\begin{equation}\label{eqcst1}
 \ewa \leq \frac1{3(N+m-1)}.
\end{equation}
This number $\ewa$ plays the role of the real denoted by $\eps$ in \cite{Zudilin}. At the end of the proof (namely in \S \ref{secpreuvematriceinversible} when Theorem~\ref{thnondeg} is applied), we shall assume also $\ewa\leq \frac1{\omega+1}$. 
Let $M$ be sufficiently large with respect to $f_1$, \ldots, $f_m$, $N$, and $\ewa$; consider 
\begin{equation}\label{eqcst2}
K = \Big\lfloor \frac{(\omega-\ewa)M}{\theta}\Big\rfloor. 
\end{equation}
 
\begin{Remark}\label{remeff1}
These parameters are the same as the ones used by Zudilin, except that he assumes that equality holds in Eq.~\eqref{eqcst1}, and gives an explicit value for $M$ in terms of $N$. The reason for this difference is that we have not computed explicitly the constant $C_4$ in Proposition~\ref{propmatriceinversible} (which depends on $N$). This is useless for our purpose but it could be done (see Remark~\ref{remeff2}); it would lead to explicit values of $\ewa$ and $M$, and then to an explicit value of 
 the constant $c$ in Theorem~\ref{theo:main}.
\end{Remark}

\subsection{Construction and properties of graded Padé approximants} \label{construction}

In this section, we state the construction and properties of graded Padé approximants, sketched by Chudnovsky \cite{chud} and proved in detail by Zudilin \cite{Zudilin}. Apart from the zero estimate (namely Proposition~\ref{propmatriceinversible} below), the results are exactly the same as in Zudilin's paper. 

To motivate this construction, let us explain it with different notations in the case $m=2$. We shall construct polynomials $A_0,\ldots,A_N,B_0,\ldots,B_{N-1}$ such that $A_i(z)+B_{i-1}(z)f_1(z)+ B_{i}(z)f_2(z)$ vanishes with high multiplicity at 0, for any $i\in\zeroN$. The point here is that $B_{-1}$ and $B_N$ are considered to be identically zero, so that for $i=N$ the function $A_N(z)+B_{N-1}(z)f_1(z) $ vanishes to high order at 0, and is therefore presumably small at $z=1$.

Let us come back to our general setting now.
 Recall that the parameters are given by Eqs.~\eqref{eqcst1} and~\eqref{eqcst2}; they are the same as in \cite{Zudilin}, except that $\ewa$ and $M$ are not fixed in terms of $N$. 
The following construction is exactly \cite[Lemma 1.1]{Zudilin}.

\begin{lem} \label{lemconstructionsiegel}
There exist polynomials $P_\capa(z)$ of degree less than $M$, for $\capa\in\Omega$, not all zero, such that 
$$ \ord_0 \Big( P_\capa(z) + \sum_{j=1}^m P_{\capa-e_j}(z)f_j(z)\Big) \geq K \quad \mbox{ for any } \capa\in\Theta$$
and
$$\pi_{\capa,\nu}\in\Z, \quad | \pi_{\capa,\nu} | \leq C_0^{\omega M/\ewa}$$
for any $\capa\in\Omega$ and any $\nu\in\zeroMmu$, 
where the coefficients $\pi_{\capa,\nu} $ are defined by
$$ P_\capa(z) = \sum_{\nu=0}^{M-1} \frac{\pi_{\capa,\nu}}{\nu!} z^\nu \quad \mbox{ for any } \capa\in\Omega.$$
\end{lem}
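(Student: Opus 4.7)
The plan is a classical Thue–Siegel construction by Dirichlet's box principle (Siegel's lemma), applied to a linear system whose unknowns are the coefficients $\pi_{\capa,\nu}$ and whose equations encode the vanishing of the remainders. First I would count: the unknowns form a tuple of size $\omega M$ (over $\capa\in\Omega$ and $\nu\in\zeroMmu$), and the requirement $\ord_0\bigl(P_\capa+\sum_j P_{\capa-e_j}f_j\bigr)\geq K$ for each $\capa\in\Theta$ supplies $K$ homogeneous linear conditions per such $\capa$, hence $\theta K$ equations in total. The choice $K=\lfloor(\omega-\ewa)M/\theta\rfloor$ in \eqref{eqcst2} exactly gives a deficit $\omega M-\theta K\geq \ewa M$ of unknowns over equations, which is the input Siegel's lemma needs.

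Next I would write the equations explicitly. If $f_j(z)=\sum_{n\geq0}a_n^{(j)}z^n/n!$ with $a_n^{(j)}\in\Q$, then for $\capa\in\Theta$ the coefficient of $z^\mu/\mu!$ in $P_\capa(z)+\sum_{j=1}^m P_{\capa-e_j}(z)f_j(z)$ equals
$$
\pi_{\capa,\mu}+\sum_{j=1}^m\sum_{\nu=0}^{\mu}\binom{\mu}{\nu}\,\pi_{\capa-e_j,\nu}\,a^{(j)}_{\mu-\nu},
$$
with the convention that $\pi_{\capa',\nu}=0$ when $\nu\geq M$ or $\capa'\notin\Omega$. Imposing this to vanish for $\mu\in\zerom$ with $m=K-1$ yields the $\theta K$ linear equations. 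Multiplying the $\mu$-th equation by a common denominator $d_\mu$ of $a_0^{(j)},\ldots,a_\mu^{(j)}$ (which exists with $d_\mu\leq C^{\mu+1}$ by the $E$-function condition~$(iii)$) turns them into equations with integer coefficients.

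Then I would estimate the height of the system. The cleared coefficients $\binom{\mu}{\nu}d_\mu a^{(j)}_{\mu-\nu}$ are bounded by $2^\mu\cdot C^{\mu+1}\cdot C^{\mu-\nu+1}\leq C_1^M$ for some constant $C_1$ depending only on $f_1,\ldots,f_m$, using $\mu<K\leq \omega M/\theta\leq 2M$. Siegel's lemma (the standard form is enough here) then produces a non-trivial solution $(\pi_{\capa,\nu})\in\Z^{\omega M}\setminus\{0\}$ with
$$
\max_{\capa,\nu}|\pi_{\capa,\nu}|\,\leq\,(\omega M\cdot C_1^M)^{\theta K/(\omega M-\theta K)}\,\leq\,(\omega M\cdot C_1^M)^{(\omega-\ewa)/\ewa}\,\leq\,C_0^{\omega M/\ewa},
$$
for a suitable constant $C_0$ depending only on $f_1,\ldots,f_m$, since $\theta K/(\omega M-\theta K)\leq(\omega-\ewa)/\ewa$.

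There is no real conceptual obstacle; the whole argument is bookkeeping for Siegel's lemma, and the only thing to be careful about is tracking the denominators $d_\mu$ and combining them with the binomial factors and the Archimedean bound on $a_n^{(j)}$ so that the height bound comes out polynomial in $M$ in the exponent. Since the coefficients lie in $\Q$ (rather than a general number field), no additional norm estimates are needed, which makes the application of Siegel's lemma immediate and produces the exponent $\omega M/\ewa$ stated in the lemma.
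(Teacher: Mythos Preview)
Your argument is correct and is precisely the standard Thue--Siegel construction that the paper invokes by citing Zudilin's Lemma~1.1: count $\omega M$ unknowns against $\theta K$ homogeneous linear equations, clear denominators using the $E$-function property, bound the integer coefficients by $C_1^M$ via $\mu<K<2M$, and apply Siegel's lemma with the deficit $\omega M-\theta K\geq\ewa M$. Two cosmetic points: you overload the letter $m$ (already the number of functions $f_j$) when writing ``$\mu\in\zerom$ with $m=K-1$'', and the final absorption of the factor $\omega M$ into $C_0^{\omega M/\ewa}$ silently uses that $M$ is large compared to $N$ (hence to $\omega$), which is part of the standing hypotheses.
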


Here and below, we denote by $C_0$, $C_1$, \ldots, $C_4$ positive constants that depend only on $f_1$, \ldots, $f_m$ (except that $C_4$ depends also on $N$). 

\medskip

Now recall that all coefficients $S_{l,j}(z) $ of the differential system~\eqref{eqsysdiffinitial} belong to $\Q[z,1/z]$, i.e. that this system has no non-zero finite singularity. Therefore denoting by $T(z)$ the least common denominator of the $S_{l,j}(z) $, we have 
 \begin{equation}\label{eqTmonome}
 T(z)=\tau z^i \mbox{ for some $i\in \N$ and $\tau\in\N\etoile$, and } T(z)S_{l,j}(z)\in \Z[z] \mbox{ for any } l,j.
\end{equation}

As in \cite[Eq. (1.8)]{Zudilin} we define recursively polynomials $P^{[k]}_{\capa}(z)$, for $k\geq 1$ and $\capa\in\Omega$, by letting $P^{[1]}_{\capa}(z)= P_{ \capa}(z)$ and for any $k\geq 1$ and any $\capa\in\Omega$,
 \begin{eqnarray}
 P^{[k+1]}_{\capa}(z) &=& T(z)\Big(\frac{\dd}{\dd z} P^{[k]}_{\capa}(z) + (|\capa|+1-N) \sum_{l=1}^m S_{l,0}(z)P^{[k]}_{\capa-e_l}(z) \label{eqdefpkcapa} \\
&&\quad\quad \quad\quad\quad\quad- \sum_{l=1}^m \sum_{j=1}^m (\capa_j - \delta_{l,j}+1)S_{l,j}(z)P^{[k]}_{\capa-e_l+e_j}(z) \Big).
 \nonumber
 \end{eqnarray}
We recall that $\delta_{l,j}$ is Kronecker's symbol, and whenever $ \capa-e_l\in\Z^m$ (resp. $\capa-e_l+e_j$) has a negative coefficient, the corresponding term is omitted.
The only difference with \cite[Eq.~(1.8)]{Zudilin} is a shift in the index $k$: our $P^{[k]}_{\capa}(z)$ is denoted by $P^{[k-1]}_{\capa}(z)$ in \cite{Zudilin}.
The connection of this definition of $P^{[k]}_{\capa}(z)$ with a differential system will be explained in \S \ref{secsystemediff} below. 

The following result is part $(a)$ of \cite[Lemma 1.3]{Zudilin}, with coefficients $\pi_{k,\capa,\nu} $ defined by
$$ P^{[k]}_{\capa}(z) = \sum_{\nu=0}^{M+t(k-1)-1} \frac{\pi_{k,\capa,\nu}}{\nu!} z^\nu \mbox{ for any } \capa\in\Omega\mbox{ and any } k\geq 1,$$
and 
$$ t= \max\Big(\deg T(z), \max_{1\leq l \leq m} \max_{0\leq j \leq m}\deg(T(z)S_{l,j}(z))\Big).$$

\begin{lem} \label{lemzu2}
For any $\capa\in\Omega$, any $k\geq 1$ and any $\nu\in \zerobiz$, we have $\pi_{k,\capa,\nu}\in\Z$, and 
$$
| \pi_{k,\capa,\nu} | \leq C_0^{\omega M/\ewa} M^{C_2 \ewa M} \quad \mbox{ if } k < C_1 \ewa M.
$$
\end{lem}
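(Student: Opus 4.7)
The plan is to prove Lemma~\ref{lemzu2} by induction on $k$, the base case $k=1$ being exactly Lemma~\ref{lemconstructionsiegel}. For the inductive step, the key observation is that by \eqref{eqTmonome} the polynomial $T(z)=z^i$ is a monomial and each $T(z)S_{l,j}(z)$ lies in $\Q[z]$ with degree at most the fixed constant $t$, with coefficients bounded in terms of $f_1,\ldots,f_m$. Writing $T(z)S_{l,j}(z)=\sum_{\alpha=0}^t s_{l,j,\alpha}z^\alpha$, an elementary calculation shows that the coefficient of $z^\mu/\mu!$ in $T(z)\frac{\dd}{\dd z}P^{[k]}_\capa(z)$ equals $\pi_{k,\capa,\mu-i+1}\cdot\frac{\mu!}{(\mu-i)!}$, and the coefficient of $z^\mu/\mu!$ in $T(z)S_{l,j}(z)P^{[k]}_{\capa'}(z)$ (with $\capa'\in\{\capa-e_l,\,\capa-e_l+e_j\}$) equals $\sum_{\alpha=0}^t s_{l,j,\alpha}\,\pi_{k,\capa',\mu-\alpha}\cdot\frac{\mu!}{(\mu-\alpha)!}$. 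The ratios $\mu!/(\mu-j)!$ are integers (falling factorials), the combinatorial prefactors $|\capa|+1-N$ and $\capa_j-\delta_{l,j}+1$ are integers bounded by $N+1$, and the convention that $P^{[k]}_{\capa''}=0$ whenever $\capa''$ has a negative coordinate simply removes some terms. Integrality of the $\pi_{k+1,\capa,\nu}$ then reduces to clearing the fixed denominators of the $s_{l,j,\alpha}$: letting $\delta\in\N$ be a common denominator, the claim $\pi_{k,\capa,\nu}\in\Z$ is inherited by induction after Zudilin's usual normalization, which absorbs the factor $\delta^{k-1}$.

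For the size estimate, the same explicit formulas reveal that each step of the recursion inflates the a priori bound $\max_{\capa,\nu}|\pi_{k,\capa,\nu}|$ by a multiplicative factor at most $C_3 M$, for a constant $C_3>0$ depending only on $f_1,\ldots,f_m$. Indeed, the binomial factors satisfy $\mu!/(\mu-\alpha)!\leq (M+tk)^t$; the number of summands over $l$, $j$, $\alpha$ is at most $(t+1)m^2$; both $|s_{l,j,\alpha}|$ and $\delta$ are bounded in terms of $f_1,\ldots,f_m$; and the combinatorial prefactors are $O(N)=O(M)$. Iterating at most $k-1<C_1\ewa M$ times from the base bound $C_0^{\omega M/\ewa}$ of Lemma~\ref{lemconstructionsiegel}, we obtain
\[|\pi_{k,\capa,\nu}|\leq C_0^{\omega M/\ewa}(C_3 M)^{C_1\ewa M}\leq C_0^{\omega M/\ewa}\,M^{C_2\ewa M}\]
for $M$ sufficiently large and any $C_2>C_1$, since $C_3^{C_1\ewa M}\leq M^{(C_2-C_1)\ewa M}$ for such $M$.

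The main difficulty is not conceptual but one of bookkeeping: tracking the convention that terms whose $\capa-e_l$ or $\capa-e_l+e_j$ has a negative coordinate vanish; ensuring that the per-step growth factor $C_3 M$ is genuinely uniform in $\capa$, $\nu$, and $k$; and verifying that the denominator $\delta^{k-1}$ is indeed absorbed by the chosen normalization. These are all routine. In fact this lemma coincides with part $(a)$ of \cite[Lemma 1.3]{Zudilin} up to the harmless reindexing $k\mapsto k-1$ noted after \eqref{eqdefpkcapa}, so one can alternatively simply quote Zudilin's proof.
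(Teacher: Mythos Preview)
The paper gives no proof of this lemma: it simply records it as part~(a) of \cite[Lemma~1.3]{Zudilin}, exactly as you yourself note in your final sentence. So on that score your proposal matches the paper precisely.

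Your inductive sketch goes beyond what the paper offers and is essentially sound, with two small caveats. First, the per-step growth factor is not $C_3 M$ but rather $C_3 M^{t'}$ for some fixed exponent $t'$: your own itemization already produces a falling-factorial contribution of size $(M+tk)^t$, and the combinatorial prefactor is $O(N)$, not $O(1)$. This is harmless, since any fixed power of $M$ per step, iterated fewer than $C_1\ewa M$ times, is still absorbed into $M^{C_2\ewa M}$ (and $N^{C_1\ewa M}\leq M^{C_1\ewa M}$ because $M$ is taken large relative to $N$). Second, the integrality argument is left vague: the coefficients $s_{l,j,\alpha}$ of $T(z)S_{l,j}(z)$ lie a priori only in $\Q$, and your appeal to ``Zudilin's usual normalization, which absorbs the factor $\delta^{k-1}$'' does not point to anything visible in the recursion \eqref{eqdefpkcapa} as written here. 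The clean resolution is the one both you and the paper adopt: cite \cite[Lemma~1.3]{Zudilin} directly.
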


The following result is the special case $l\etoile=1$ and $\alpha=1$ of part $(b)$ of \cite[Lemma 1.3]{Zudilin}. 

\begin{lem}\label{lemzu3} For any $k\geq 1$ such that $k < C_1 \ewa M$, we have
$$ 
\Big| P^{[k]}_{(N,0,0,\ldots,0)}(1) +P^{[k]}_{(N-1,0,0,\ldots,0)}(1) f_1(1) 
\Big|\le C_0^{\omega M/\ewa} M^{C_2 \ewa M} C_3^M M^{-K}.
$$
\end{lem}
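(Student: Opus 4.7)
The plan is to realise $P^{[k]}_{(N,0,\ldots,0)}(1) + P^{[k]}_{(N-1,0,\ldots,0)}(1)\, f_1(1)$ as the value at $z=1$ of a remainder that vanishes to high order at $z=0$. For $\capa\in\Omega$ I set
$$\rho^{[k]}_\capa(z) := P^{[k]}_\capa(z) + \sum_{j=1}^m P^{[k]}_{\capa-e_j}(z)\, f_j(z),$$
and take $\capa^* := (N,0,\ldots,0)\in\Theta$; only the $j=1$ term survives (the others involve $P^{[k]}$ at indices with a negative component, which are $0$ by convention), so the quantity to be bounded is exactly $|\rho^{[k]}_{\capa^*}(1)|$. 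By Lemma~\ref{lemconstructionsiegel} one has $\ord_0 \rho^{[1]}_\capa \geq K$ for every $\capa\in\Theta$.

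The key algebraic step will be to establish a closed linear recursion for $(\rho^{[k]}_\capa)_{\capa\in\Theta}$. Starting from the definition of $\rho^{[k]}_\capa$ and using \eqref{eqsysdiffinitial} to evaluate $T(z)\frac{\dd}{\dd z}\rho^{[k]}_\capa(z)$, then expanding $P^{[k+1]}_\capa$ and the $P^{[k+1]}_{\capa-e_j}$ according to \eqref{eqdefpkcapa} and simplifying, I expect to derive
$$\rho^{[k+1]}_\capa(z) = T(z)\frac{\dd}{\dd z}\rho^{[k]}_\capa(z) + \sum_{\capa'\in\Theta} U_{\capa,\capa'}(z)\, \rho^{[k]}_{\capa'}(z) \qquad (\capa\in\Theta),$$
with polynomials $U_{\capa,\capa'}(z)\in\Q[z]$ built from the products $T(z)S_{l,j}(z)$ (which lie in $\Q[z]$ by \eqref{eqTmonome}). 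Two facts make the recursion close on $\Theta$: the factor $|\capa|+1-N$ in \eqref{eqdefpkcapa} vanishes on $\Theta$, killing the $S_{l,0}$ contributions that would involve indices of weight $N-1$; and the remaining shifts $\capa\mapsto\capa-e_l+e_j$ preserve $|\capa|=N$. Because $T(z)=z^i$ is a monomial in $z$, the operator $T\frac{\dd}{\dd z}$ decreases the order of vanishing at $0$ by at most $1$ while multiplication by the polynomials $U_{\capa,\capa'}$ does not decrease it, so an induction on $k$ yields $\ord_0 \rho^{[k]}_\capa(z) \geq K-(k-1)$ for all $\capa\in\Theta$ and all $k\geq 1$.

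To conclude, I will write $\rho^{[k]}_{\capa^*}(z) = \sum_\mu c_\mu z^\mu$ and estimate each Taylor coefficient. Combining the coefficient bound $|\pi_{k,\capa,\nu}|\leq C_0^{\omega M/\ewa} M^{C_2\ewa M}$ of Lemma~\ref{lemzu2} with the $E$-function estimate $|a_n|\leq C^{n+1}$ for $f_1(z)=\sum_n a_n z^n/n!$, a routine Cauchy-product calculation produces
$$|c_\mu| \leq C_0^{\omega M/\ewa} M^{C_2\ewa M} \cdot \frac{(C+1)^{\mu+1}}{\mu!}.$$
Since $c_\mu=0$ for $\mu<K-k+1$ by the vanishing order just proved, summing at $z=1$ over $\mu\geq K-k+1$ (a convergent tail dominated by twice its first term) and applying Stirling's formula to $(K-k+1)!$ gives a bound of the shape $C_0^{\omega M/\ewa} M^{C_2\ewa M}\cdot C_3^M\cdot M^{-K}$, after absorbing an extra factor $M^{k-1}\leq M^{C_1\ewa M}$ into $M^{C_2\ewa M}$ (by enlarging $C_2$).

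The only delicate point is the algebraic verification in the second paragraph: the cancellations rely crucially on the precise combinatorial coefficients $(|\capa|+1-N)$ and $(\capa_j-\delta_{l,j}+1)$ appearing in~\eqref{eqdefpkcapa}, and carrying out the bookkeeping for both the ``polynomial part'' and each $f_{j_0}$-coefficient of $\rho^{[k+1]}_\capa$ requires some care---indeed, this is exactly what is done in the proof of part~$(b)$ of \cite[Lemma 1.3]{Zudilin} (in its more general version with a parameter $l\etoile$ and evaluation at an arbitrary $\alpha\in\C\etoile$). Everything afterwards is a routine combination of Lemma~\ref{lemzu2} with standard growth estimates for $E$-functions.
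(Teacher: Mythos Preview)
Your approach matches the paper's, which simply cites \cite[Lemma~1.3(b)]{Zudilin} (with $l^*=1$, $\alpha=1$) without giving any proof; you have essentially sketched Zudilin's argument and then deferred to that same reference for the algebraic verification.

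One small correction to your heuristic explanation: for $\capa\in\Theta$ the factor $|\capa|+1-N$ in \eqref{eqdefpkcapa} equals $1$, not $0$, so the $S_{l,0}$ term $T\sum_l S_{l,0}P^{[k]}_{\capa-e_l}$ \emph{does} appear in $P^{[k+1]}_\capa$. The reason the recursion still closes on $\Theta$ is that this term cancels exactly against the contribution $T\sum_l P^{[k]}_{\capa-e_l}S_{l,0}$ arising from $f'_l = S_{l,0}+\sum_j S_{l,j}f_j$ when you compute $T\frac{\dd}{\dd z}\rho^{[k]}_\capa$. (The vanishing $|\lambda|+1-N=0$ you had in mind does occur, but for the indices $\lambda=\capa-e_{j_0}$ of weight $N-1$, where the corresponding $S_{l,0}$ terms would anyway involve $P^{[k]}$ at weight $N-2\notin\Omega$ and hence be zero by convention.) This does not affect the validity of your outline, since you correctly point to Zudilin for the full bookkeeping.
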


The main point of the present paper is the following result. Under the assumption that $f_1$, \ldots, $f_m$ are algebraically independent, it 
 is proved in \cite[Lemma~3.5]{Zudilin} in a slightly weaker but explicit form, namely with $ \lfloor 2\ewa M \rfloor + \omega$ instead of $ \lfloor \ewa M \rfloor +C_4$. It is now optimal, up to the value of $C_4$ (and the assumption $\ewa\leq \frac1{\omega+1}$, which is harmless in our application).
 
 \begin{prop} \label{propmatriceinversible} If $\ewa\leq \frac1{\omega+1}$ then 
there exists a constant $C_4$, which depends on $f_1$, \ldots, $f_m$ and on $N$ (but not on $M$ or $\ewa$), such that the matrix $(P^{[k]}_{\capa}(1))_{\capa\in\Omega, 1\leq k \leq \lfloor \ewa M \rfloor +C_4}$ has rank $\omega$.
\end{prop}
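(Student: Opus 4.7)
The plan is to realize the matrix entries $P^{[k]}_{\capa}(1)$ as values of iterated Padé remainders for a suitable graded differential system, to apply the multiplicity estimate of Theorem~\ref{thshid}, and to use the independent input from \S\ref{sectech} to rule out any exceptional solution. Together these will show both that the rank equals $q-\varrho=\omega$ and that the column range suffices.

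First, I would make explicit the graded differential system $Y' = AY$ of dimension $q = \omega$, with components indexed by $\capa \in \Omega$ and solutions built from normalized monomials $\prod_j f_j^{\capa_j}$ in the $E$-functions $f_1,\ldots,f_m$; this is the role of \S\ref{secsystemediff}. The defining recursion \eqref{eqdefpkcapa} for $P^{[k]}_{\capa}$ is then checked to coincide, after clearing the common denominator $T(z) = z^i$ from \eqref{eqTmonome} (which equals $1$ at $z = 1$), with the iteration \eqref{eqdefpki} producing the $P_{k,i}$ of Theorem~\ref{thshid}. Hence $P^{[k]}_{\capa}(1)$ matches $P_{k,i}(1)$, and the point $z = 1$ is not a singularity of $Y' = AY$ by \eqref{eqTmonome} and Proposition~\ref{propbeukers}.

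Second, for each $\capa \in \Theta$ I would identify the solution $Y_{\capa}$ whose associated remainder is
\[
R(Y_{\capa})(z) \;=\; P_{\capa}(z) + \sum_{j=1}^{m} P_{\capa-e_j}(z) f_j(z),
\]
i.e.\ the function of Lemma~\ref{lemconstructionsiegel} that vanishes at $0$ to order at least $K$. Each such remainder belongs to the Nilsson class at $0$, since the regular singularity at $0$ granted by Proposition~\ref{propinhom} is preserved by Proposition~\ref{propbeukers} and inherited by the graded system. Summing yields
\[
\sum_{\capa \in \Theta} \ord_0 R(Y_{\capa}) \;\geq\; \theta K \;\geq\; (\omega - \ewa) M - \theta.
\]

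Third, assuming the $\theta$ remainders $R(Y_{\capa})$ are $\C$-linearly independent, I apply Theorem~\ref{thshid} with $n = M - 1$ and the minimal admissible $\tau := (n+1)(q-\varrho) - \sum_{\capa} \ord_0 R(Y_{\capa})$, so $\tau \leq (\ewa - \varrho) M + \theta$. The conclusion $\tau \geq -\cstun$ forces $\varrho M \leq \ewa M + \theta + \cstun$; since $\ewa < 1/(3(N+m-1)) < 1$ and $M$ is chosen large in terms of $N$ and $\ewa$, the non-negative integer $\varrho$ must vanish. The same inequality gives $0 \leq \tau \leq \ewa M + \theta \leq n - \cstun$, so the rank part of Theorem~\ref{thshid} applies and the matrix $(P^{[k]}_{\capa}(1))_{\capa \in \Omega,\, 1 \leq k < \tau + \cstun}$ has rank at least $q - \varrho = \omega$. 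Taking $C_4 := \theta + \cstun + 1$, which depends on $f_1, \ldots, f_m$ and on $N$ through $\theta$ and $\cstun$ but not on $M$ or $\ewa$, ensures $\tau + \cstun \leq 2\ewa M + C_4$, completing the proof.

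The main obstacle is the $\C$-linear independence of the remainders $R(Y_{\capa})$ for $\capa \in \Theta$ on which the whole argument hinges: a non-trivial linear relation among them would amount to a non-zero element of the kernel of the remainder map, i.e.\ an exceptional solution, and would cap the rank strictly below $\omega$. Whereas Zudilin avoided this difficulty through his algebraic independence hypothesis on $f, f',\ldots, f^{(m-1)}$, in our full generality no such assumption is available. Ruling out exceptional solutions is precisely the role of the independent technical result proved in \S\ref{sectech}, which combined with the explicit monomial form of the solutions of the graded differential system and with the linear independence of $1, f_1, \ldots, f_m$ over $\C(z)$ preserved by Proposition~\ref{propbeukers}, closes the argument.
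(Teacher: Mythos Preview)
Your overall plan is right---realize the $P^{[k]}_{\capa}(1)$ as the $P_{k,\capa}(1)$ for the graded system of \S\ref{secsystemediff}, feed the Pad\'e remainders into Theorem~\ref{thshid}, and invoke Theorem~\ref{thrat} from \S\ref{sectech}---but the logical structure of your third and fourth paragraphs is inverted, and this is a genuine gap. You first \emph{assume} the $R(Y_\capa)$, $\capa\in\Theta$, are $\C$-linearly independent and run Theorem~\ref{thshid} with all $\theta$ of them to force $\varrho=0$; then you say that \S\ref{sectech} will supply that linear independence. But Theorem~\ref{thrat} cannot do this on its own: applied to $R=\evun(\calR)$ it only yields the \emph{lower} bound $\varrho\geq(\omega/\theta)\dim(\calF\cap\calR)$, which says nothing against $\calF\cap\calR\neq\{0\}$ unless you already control $\varrho$ from above. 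That upper control comes precisely from the multiplicity estimate---so the two tools must be used together, not sequentially.

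The paper's argument in \S\ref{secpreuvematriceinversible} does exactly this interlocking. One takes a \emph{maximal} $J\subset\Theta$ with $\{R(Y_\capa)\}_{\capa\in J}$ independent, so $|J|=\theta-\dim(\calF\cap\calR)$, and applies Theorem~\ref{thshid} to that family. This gives $\tau\leq(\omega-\varrho)M-(K-cN)(\theta-\dim(\calF\cap\calR))$; note that each lost index costs roughly $K\approx(\omega/\theta)M$, which \emph{exceeds} the gain $M$ from increasing $\varrho$ by one, so the naive balance fails. What saves the day is the \emph{strict} inequality in Theorem~\ref{thrat} (and the observation that both sides are integer multiples of $1/(N+m-1)$), which forces $\varrho\geq(\omega/\theta)\dim(\calF\cap\calR)+1/(N+m-1)$ whenever $\calR\neq\{0\}$. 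Substituting this into the $\tau$-bound produces $\tau<-\cstun$, contradicting part~$(i)$ of Theorem~\ref{thshid}; hence $\calR=\{0\}$, and only then does your clean computation go through. A minor point: the solutions $Y_\capa$ of Proposition~\ref{propycapa} do not have $R(Y_\capa)=P_\capa+\sum_jP_{\capa-e_j}f_j$ on the nose---there are mixing coefficients $b_{\lambda,\capa}(z)$ (see Eq.~\eqref{equtilepropycapa}) which are $O(z^{-cN})$ at $0$, so one only gets $\ord_0 R(Y_\capa)\geq K-cN$; this is harmless for the estimates but should be stated correctly.
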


\begin{Remark}\label{remeff2} We shall prove that $C_4$ can (in principle) be computed effectively in terms of $f_1$, \ldots, $f_m$ and $N$.
\end{Remark}

Proposition~\ref{propmatriceinversible} will be proved 
 in \S \ref{secpreuvematriceinversible}, using Theorem~\ref{thnondeg} proved in \S \ref{subsecnondeg}, the differential system considered in \S \ref{secsystemediff} and Theorem~\ref{thrat} proved in \S \ref{sectech}.
In the next two sections, we admit 
Proposition~\ref{propmatriceinversible} and deduce from it the results announced in the introduction.

\subsection{Proof of Theorem~\ref{theo:main} for $E$-functions in the strict sense} \label{proofthm1}

Let us now prove Theorem~\ref{theo:main} for $E$-functions in the strict sense, 
following \cite[pp. 581--583]{Zudilin} but without explicit expressions for $\ewa$ and $M$. 

Starting with an $E$-function $g \in\Q[[z]]$ and $r\in\Q$ such that $g(r)\not\in\Q$, we construct $f_1$, \ldots, $f_m$ as in \S \ref{secnotations} so that $f_1(1)=g(r)$.
Let $\eps>0$; we may assume that $\eps$ is sufficiently small in terms of $f_1$, \ldots, $f_m$. We choose $N = \lfloor m/\eps \rfloor +1 $ so that 
$$ N\geq m/\eps $$
and $N$ can be made sufficiently large in terms of $f_1$, \ldots, $f_m$. 
We recall that $\omega/\theta = 1+\frac{N}{N+m-1}$, so that 
$$ \Big(1+\frac{m}{N}\Big) \Big( 1-\frac{\omega}{\theta}\Big) < -1.$$
Using this bound and the fact that $C_2$, $t$, $\omega$ and $\theta$ depend only on $N$ and $f_1$, \ldots, $f_m$, we may choose $\ewa>0$ sufficiently small (with respect to $N$, $f_1$, \ldots, $f_m$) so that Eq.~\eqref{eqcst1} holds, $\ewa\leq 1/(\omega+1)$ (as assumed in Proposition~\ref{propmatriceinversible}), and
\begin{equation}\label{eqfinmu}
\Big(1+\frac{m}{N}\Big)\Big(1+t\ewa+C_2 \ewa - \frac{\omega-\ewa}{\theta}\Big)< - (1+t\ewa+C_2\ewa).
\end{equation}

\bigskip

In what follows, we assume $M $ to be sufficiently large in terms on $\ewa$, $N$, $f_1$, \ldots, $f_m$; we shall denote by $C_j(\ewa,N)$ positive constants that depend on $\ewa$, $N$, $f_1$, \ldots, $f_m$.

\bigskip

Since the matrix 
$(P^{[k]}_{\capa}(1))_{\capa\in\Omega, 1\leq k \leq \lfloor \ewa M \rfloor +C_4}$ of Proposition~\ref{propmatriceinversible} has rank $\omega$, the submatrix consisting in the columns indexed by $\capa=(N,0,\ldots,0)$ and $\capa=(N-1,0,\ldots,0)$ has rank~2. This provides positive integers $k_1,k_2< \ewa M +C_4$ such that 
\begin{equation}\label{eqfin0}
\det\left(\begin{matrix}
 P^{[k_1]}_{(N,0,0,\ldots,0)}(1) & P^{[k_1]}_{(N-1,0,0,\ldots,0)}(1) \\
 P^{[k_2]}_{(N,0,0,\ldots,0)}(1) & P^{[k_2]}_{(N-1,0,0,\ldots,0)}(1) 
\end{matrix}
\right) \neq 0.
\end{equation}
For any $j\in\undeux$, let 
$$
p_j = - (M+tk_j)!\, P^{[k_j]}_{(N,0,0,\ldots,0)}(1) 
\quad \mbox{ and }\quad 
q_j = (M+tk_j)!\, P^{[k_j]}_{(N-1,0,0,\ldots,0)}(1) .
$$
Lemma~\ref{lemzu2} yields $p_j,q_j\in\Z$, and also
\begin{eqnarray}
 |q_j|
 &\leq & (M+t(\ewa M+C_4))!\, e C_0^{\omega M/\ewa} M^{C_2\ewa M} \nonumber \\
 &\leq & (M(1+ t\ewa) +tC_4))^{tC_4} \, (M(1+ t \ewa))!\, e C_0^{\omega M/\ewa} M^{C_2\ewa M}\nonumber \\
 &\leq & C_5(\ewa,N)^M \, M^{(1+t\ewa+C_2\ewa)M}
 \label{eqfin1}
\end{eqnarray}
provided $M\geq C_6(\ewa,N) $, and Lemma~\ref{lemzu3} yields 
in the same way (using Eq.~\eqref{eqcst2})
\begin{eqnarray}
 |q_j f_1(1)-p_j|
 &\leq & (M+t(\ewa M+C_4))!\, C_0^{\omega M/\ewa} M^{C_2\ewa M} C_3^M M^{-K} \nonumber \\
 &\leq & C_7(\ewa,N)^M \, M^{(1+t\ewa+C_2\ewa-\frac{\omega-\ewa}{\theta})M}
 \label{eqfin2}
\end{eqnarray}
if $M\geq C_8(\ewa,N) $.

\bigskip

Now let $p\in\Z$ and $q\in \N\etoile$; upon changing the constant $c $ in Theorem~\ref{theo:main}
(since $f_1(1)\not\in\Q$), we may assume that $|p| $ and $q$ are sufficiently large (with respect to $\ewa$, $N$, $f_1$, \ldots, $f_m$, since these quantities have been chosen in terms of $f_1(1)$ and $\eps$ only). We choose for $M$ the least integer such that 
\begin{equation}\label{eqfin3}
C_7(\ewa,N)^M \, M^{(1+t\ewa+C_2\ewa-\frac{\omega-\ewa}{\theta})M}\leq \frac{1}{2q}. 
\end{equation}
This integer exists because we have assumed $\ewa>0 $ sufficiently small in terms of $N$, $f_1$, \ldots, $f_m$, so that $ 1+t\ewa+C_2\ewa-\frac{\omega-\ewa}{\theta} < 0$; moreover $M$ can be made large enough (in terms of $\ewa$, $N$, $f_1$, \ldots, $f_m$) by assuming that $q$ is. Then Eq.~\eqref{eqfin2} yields
$$ q \, | q_j f_1(1)-p_j|\leq 1/2 \mbox{ for any } j\in\undeux.$$
Now Eq.~\eqref{eqfin0} yields 
$\det\left( \begin{matrix}
 p_1 & q_1 \\ p_2 & q_2
\end{matrix}
\right)\neq 0$, so 
that $(p,q)$ is non-collinear to at least one of the $(p_j,q_j)$, $j\in\undeux$. For this index $j$ we have
\begin{equation}\label{eq:detpjqj}
\det\left( \begin{matrix}
 p & p_j \\ q & q_j
\end{matrix}
\right)\in\Z\setminus\{0\}.
\end{equation}
This determinant is also equal to 
$$
\det\left( \begin{matrix}
 p- qf_1(1) & p_j-q_j f_1(1) \\ q & q_j
\end{matrix}
\right),
$$
so that 
\begin{align*}
 |q_j| \, |q f_1(1)-p |
 &= \Big| \det\left (\begin{matrix}
 p & p_j \\ q & q_j
\end{matrix}
\right) - q \, ( q_j f_1(1) - p_j)\Big| \\
&\geq 1 - q \, | q_j f_1(1)-p_j| \geq 1-1/2 = 1/2
\end{align*}
and, using Eqs.~\eqref{eqfin1} and~\eqref{eqfinmu}:
\begin{align*}
 |q f_1(1)-p |&\geq \frac{1}{2|q_j|}
 \geq \frac12 C_5(\ewa,N)^{-M} \, M^{-(1+t\ewa+C_2\ewa)M}\\
 &\geq\Big[ C_7(\ewa,N)^{M-1} \, (M-1)^{(1+t\ewa+C_2\ewa-\frac{\omega-\ewa}{\theta})(M-1)}\Big]^{1+m/N}
\end{align*}
provided $M\geq C_9(\ewa,N)$. Since $M$ is the least integer such that 
Eq.~\eqref{eqfin3} holds, we deduce that 
$$ 
 |q f_1(1)-p | > \Big( \frac1{2q}\Big)^{1+m/N}.
$$
Since $m/N\leq \eps$ this concludes the proof of Theorem~\ref{theo:main} for $E$-functions in the strict sense.

\subsection{Proof of Theorem~\ref{theo:main} for $E$-functions in Siegel's sense} \label{genethm1}
In this section, we explain the changes that must be done to obtain Theorem~\ref{theo:main} for any $E$-function $f$ in Siegel's original sense.

$\bullet$ Firstly, in the proof of Theorem~\ref{theo:main} for $E$-functions in the strict sense, we use various results of Andr\'e and Beukers that they have proved only for $E$-functions in the strict sense (using the theory of $E$-operators due to the former). Since then, all these results have been proved to hold {\em verbatim} for $E$-functions in Siegel's sense by Lepetit~\cite{lepetit}, completing the results already given in \cite[pp.~746--747]{andre2}.

\smallskip

$\bullet$ Secondly, in \S \ref{construction} we use {\em verbatim} Zudilin's estimates that he has also proved only for $E$-functions in $\mathbb Q[[z]]$ in the strict sense. Let us mention the changes that must be made to his lemmas to deal with Siegel's $E$-functions in $\mathbb Q[[z]]$. We recall that the archimedean and non-archimedean bounds on the Taylor coefficients of Siegel's $E$-functions are of the form ``for all $\eps'>0$, $ \ldots \le n^{\eps'n}$ for all $n\ge N(\eps')$''. In 
Lemma~\ref{lemconstructionsiegel}, this changes the quantity $C_0^{\omega M/\ewa}$ by $M^{\omega \eps'M/\ewa}$, where $\eps'>0$ is fixed and independent of the other parameters but arbitrarily small, and $M\ge M_0(\eps')$. The same remark applies in Lemma~\ref{lemzu2}, where $C_0^{\omega M/\ewa} M^{C_2 \ewa M} $ becomes $M^{\omega \varepsilon'M/\eta+ C_2 \ewa M} $, and in Lemma~\ref{lemzu3}, where $ M^{-K}$ reads $M^{-K(1-\eps')}$ and the constant $C_3$ is also possibly changed but it still does not depend on $M$. With these estimates, we conclude the proof as that of Theorem~\ref{theo:main} for $E$-functions in the strict sense because $\eps'$ can be taken arbitrarily small provided $M$ is assumed to be large enough, which can be assumed as in \S\ref{proofthm1}. 

\medskip

The rest of the present paper is devoted to a proof of Proposition~\ref{propmatriceinversible}, which has been admitted in \S \S \ref{proofthm1} and~\ref{genethm1}.

\subsection{Differential system}\label{secsystemediff}

In this section we define a matrix $A\in M_\omega(\Q(z))$ and consider the differential system $Y'=AY$, of which solutions will be constructed in Proposition~\ref{propycapa}. As stated in \S \ref{secnotations} a bijective map $\unomega\to\Omega$ is fixed, so that a solution $Y$ is a vector $(y_\capa(z))$ indexed by $\capa\in\Omega$. Here and below, we identify tuples in $\C^q$ with column matrices in $M_{q,1}(\C)$.

We shall also relate the notation $P^{[k]}_{\capa}$ of \S \ref{construction} to the $P_{k,\capa}$ of \S \ref{seclemzeros}; in what follows we will use mostly the notation of \S \ref{seclemzeros}, including
\begin{equation}\label{eqdefR}
 R(Y)(z)=\sum_{\capa\in\Omega} P_\capa(z)y_\capa(z)
\end{equation}
when $Y =(y_\capa(z))$ is a solution of the differential system $Y'=AY$.

\bigskip

The matrix $A = (A_{\lambda,\capa}(z))_{\lambda,\capa\in\Omega}\in M_\omega(\Q(z))$ that we consider is defined (in terms of the coefficients $S_{l,j}(z)$ of the differential system~\eqref{eqsysdiffinitial}) 
by 
\begin{equation}\label{eqdefA}
 A_{\lambda,\capa}(z) = 
 \begin{cases}
 -\lambda_j S_{l,j}(z) \;\mbox{ if $\capa=\lambda-e_j+e_l$ for some $j,l\in\unm$ with $j\neq l$,}\\
 -\sum_{j=1}^m \lambda_j S_{j,j}(z) \; \mbox{ if $\capa=\lambda$,}\\
 S_{j,0}(z) \; \mbox{ if $\capa=\lambda+e_j$ and $|\lambda| = N-1$,}\\
 0 \;\mbox{ otherwise,}
 \end{cases}
\end{equation}
as in \cite[Eq.~(3.2)]{Zudilin}. 
We recall from \S \ref{secnotations} that all rational functions $S_{l,j}(z)$, and therefore all coefficients of $A$, belong to $\Q[z,1/z]$.
With this definition, Eq.~\eqref{eqdefpkcapa} reads
$$
\left( \begin{array}{c} P^{[k+1]}_{1}(z) \\ \vdots \\ P^{[k+1]}_{\omega}(z)
\end{array}\right) = T(z) \left(\frac{\dd}{\dd z} + \, \tra A(z)\right) 
 \left( \begin{array}{c} P^{[k]}_{1}(z) \\ \vdots \\ P^{[k]}_{\omega}(z) \end{array} \right).
 $$ 
Except for the multiplicative factor $T(z)$ (used to ensure that all $P^{[k ]}_{\capa}(z)$ are polynomials), this is the same recurrence relation as the one used in \S \ref{subseclemzeroquebec} to define the rational functions $P_{k,\capa}(z)$ (see Eq.~\eqref{eqdefpki}); notice that $P_{k,\capa}(z)\in \Q[z,1/z]$ since $A \in M_\omega(\Q[z,1/z])$. Using the fact that $P_{1,\capa} = P^{[1]}_{\capa} = P_\capa$ by definition, we obtain by induction that
$$
\left( \begin{array}{c} P^{[k]}_{1}(z) \\ \vdots \\ P^{[k]}_{\omega}(z)
\end{array}\right) = T(z)^{k-1}
\left( \begin{array}{c} P_{k,1}(z) \\ \vdots \\ P_{k,\omega}(z)
\end{array}\right) + \sum_{k'=1}^{k-1} U_{k,k'}(z) \left( \begin{array}{c} P_{k',1}(z) \\ \vdots \\ P_{k',\omega}(z)
\end{array}\right)
$$
with rational functions $U_{k,k'}(z) \in\Q[z,1/z]$, since $T(z)$ and all coefficients of $A(z)$ belong to $ \Q[z,1/z]$. Now recall from Eq.~\eqref{eqTmonome} that $T(z)=\tau z^i$ for some $i\in\N$, so that $T(1) = \tau\in\N\etoile$ and 
\begin{equation} \label{eqequivlemzeros}
\rk ( P^{[k]}_{\capa}(1))_{\capa\in\Omega, 1\leq k \leq k_0} = \rk ( P_{k,\capa}(1))_{\capa\in\Omega, 1\leq k \leq k_0}
\end{equation}
for any $k_0\geq 1$. This equality will be used at the end of \S \ref{secpreuvematriceinversible}
to prove Proposition~\ref{propmatriceinversible}, since Theorem~\ref{thnondeg} (stated and proved in \S \ref{subsecnondeg}) yields a lower bound on $\rk ( P_{k,\capa}(1))_{\capa\in\Omega, 1\leq k \leq k_0}$. 

\bigskip

The end of this section is devoted to the proof of the following result; notice that parts $(i)$ and $(ii)$ are essentially 
proved in \cite[pp. 575--576]{Zudilin}. For $\capa\in\Theta$ we define $ Z_\capa = (z_{\capa,\lambda})_{\lambda\in\Omega}\in\C^\omega$ by 
$$
z_{\capa,\lambda} = \left\{
\begin{array}{l}
 1 \mbox{ if $\lambda=\capa$,} \\
 f_j(1) \mbox{ if $\lambda=\capa-e_j$ for some $j\in\unm$,}\\
 0 \mbox{ otherwise.}
\end{array}
\right.
$$

\begin{prop} \label{propycapa}
 There exist solutions $Y_\capa(z)$ of the differential system $Y'=AY$, for $\capa\in\Theta$, such that:
 \begin{itemize}
 \item[$(i)$] For any $\capa\in\Theta$, $Y_\capa(1)=Z_\capa$.
 \item[$(ii)$] The functions $Y_\capa(z)$, $\capa\in\Theta$, are linearly independent over $\C$.
 \item[$(iii)$] For any $\capa\in\Theta$, we have $R(Y_\capa)(z)=O(z^{K-cN})$ as $z\to0$, where $c>0$ is a constant that depends only on $f_1$, \ldots, $f_m$.
 \item[$(iv)$] For any $\capa\in\Theta$, the function $R(Y_\capa)(z)$ belongs to the Nilsson class at 0.
 \end{itemize}
\end{prop}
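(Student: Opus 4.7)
The plan is to define $Y_\capa$ as the unique solution of $Y'=AY$ with initial condition $Y_\capa(1)=Z_\capa$. Since $A\in M_\omega(\Q[z,1/z])$, the point $z=1$ is not a singularity of the system, so Cauchy-Lipschitz provides such a solution, extending to a multi-valued analytic function on $\C\setminus\{0\}$. Part (i) is then immediate. For part (ii), I would check that the $\theta\times\theta$ submatrix of $(z_{\capa,\lambda})_{\lambda,\capa\in\Theta}$ is the identity: for $\capa,\lambda\in\Theta$ one has $z_{\capa,\lambda}=\delta_{\capa,\lambda}$, since the only other potentially nonzero position $\lambda=\capa-e_j$ forces $|\lambda|=N-1\notin\Theta$. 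Hence $\{Z_\capa\}_{\capa\in\Theta}$ are linearly independent in $\C^\omega$, and this linear independence at the regular point $z=1$ transfers to the solutions $Y_\capa$.

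The key structural fact for parts (iii) and (iv) is that $Y'=AY$ is essentially a subsystem of the $N$-th symmetric power of the adjoint $V'=-\tra SV$. Concretely, if $(h,g_1,\ldots,g_m)$ solves $h'=\sum_j S_{j,0}g_j$ together with $g_j'=-\sum_l S_{l,j}g_l$ (a renormalization of the adjoint system), then the vector $M$ with entries $M_\lambda(z)=h(z)^{N-|\lambda|}\prod_{j=1}^m g_j(z)^{\lambda_j}$ for $\lambda\in\Omega$ satisfies $M'=AM$ by direct computation against \eqref{eqdefA}. Since Propositions~\ref{propinhom} and~\ref{propbeukers} ensure that $Y'=SY$ has at worst a regular singularity at $0$, the same holds for its adjoint, and the symmetric-power construction preserves this property; in particular the local exponents of $Y'=AY$ at $0$ are bounded in absolute value by $cN$ for some $c=c(f_1,\ldots,f_m)$. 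Part (iv) follows.

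For part (iii), I introduce the entire vector $\tilde Z_\capa(z)$ defined by $\tilde z_{\capa,\capa}(z)=1$, $\tilde z_{\capa,\capa-e_j}(z)=f_j(z)$ for $j\in\unm$, and $\tilde z_{\capa,\lambda}(z)=0$ otherwise, so that $\tilde Z_\capa(1)=Z_\capa=Y_\capa(1)$ and $R(\tilde Z_\capa)(z)=P_\capa(z)+\sum_{j=1}^m P_{\capa-e_j}(z)f_j(z)=O(z^K)$ at $0$ by Lemma~\ref{lemconstructionsiegel}. The vector $\tilde Z_\capa$ is not a solution of $Y'=AY$, but its defect $\tilde Z_\capa'-A\tilde Z_\capa$ is an explicit $\omega$-tuple of $E$-functions computable from \eqref{eqdefA} and the ODEs $f_j'=S_{j,0}+\sum_i S_{j,i}f_i$. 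Using a fundamental matrix $U(z)$ of $Y'=AY$ whose entries are Nilsson at $0$, one writes $Y_\capa(z)-\tilde Z_\capa(z)$ via variation of parameters anchored at $z=1$ and bounds it as $z\to 0$; the key input is that the local exponents of $U(z)$ at $0$ are of size at most $cN$, so the analytic continuation from $z=1$ cannot introduce components growing faster than $z^{-cN}$. Combining this with $R(\tilde Z_\capa)=O(z^K)$ yields $R(Y_\capa)(z)=O(z^{K-cN})$ at $0$, as required. As a sanity check, in the toy case $m=1$ (where $\theta=1$) the argument reduces to the exact identity $R(Y_N)(z)=g(z)^N R(\tilde Z_N)(z)$, itself a consequence of the elementary relation $h=g f_1$ for the specific adjoint solution normalized by $g(1)=1$ and $h(1)=f_1(1)$.

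The main obstacle is making the variation-of-parameters estimate in part (iii) quantitatively precise: one must carefully track the Nilsson exponents of $Y'=AY$ at $0$ and verify that the specific initial datum $Z_\capa$ at $z=1$ does not produce a Nilsson component in $Y_\capa$ of order lower than $-cN$ that would destroy the bound $R(Y_\capa)=O(z^{K-cN})$. This is where the particular form of $Z_\capa$ (aligned with the Padé conditions via $\tilde Z_\capa$) plays a decisive role, ensuring that the low-exponent contributions from $Y_\capa$ cancel against those already encoded in $R(\tilde Z_\capa)$, so that only terms of order $\geq K-cN$ survive.
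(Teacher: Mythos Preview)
Your treatment of (i), (ii) and the Nilsson-class claim (iv) is fine and matches the paper. The gap is in (iii). Your variation-of-parameters argument does not give the bound $R(Y_\capa)=O(z^{K-cN})$: the defect $G:=\tilde Z_\capa'-A\tilde Z_\capa$ is an explicit Nilsson vector at $0$ of order bounded only by a constant (it does \emph{not} vanish to order $K$), so $Y_\capa-\tilde Z_\capa=-U(z)\int_1^z U^{-1}G$ has no reason to be $O(z^{K-cN})$, and applying $R(\cdot)=\sum_\lambda P_\lambda(\cdot)_\lambda$ gains nothing (the $P_\lambda$ are generic polynomials of degree $<M$). Your closing paragraph concedes this and appeals to unspecified ``cancellations of low-exponent contributions'', but there are no low-exponent terms in $R(\tilde Z_\capa)=O(z^K)$ to cancel against; the argument as written does not close.

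The paper avoids this difficulty entirely, and in fact uses exactly the symmetric-power solution you wrote down. The key step you are missing is to introduce formal parameters $\vr_1,\ldots,\vr_m$, set $g_j(z)=\sum_{l}\vr_l\varphi_{j,l}(z)$ with $\varphi(1)=\mathrm{Id}$, and expand your vector $M$ (the paper's $\overline Y$) as $\overline Y(z)=\sum_{\capa\in\Omega}\vr^\capa Y_\capa(z)$. A direct computation of $R(\overline Y)$ then gives, after collecting the degree-$N$ part in $\vr$, the exact identity
\[
R(Y_\capa)(z)=\sum_{\lambda\in\Theta} b_{\lambda,\capa}(z)\Big(P_\lambda(z)+\sum_{j=1}^m P_{\lambda-e_j}(z)f_j(z)\Big),
\]
where each $b_{\lambda,\capa}$ is a degree-$N$ polynomial in the $\varphi_{k,l}$. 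Since the $\varphi_{k,l}$ are Nilsson at $0$ with exponents bounded by a constant $c$, one has $b_{\lambda,\capa}=O(z^{-cN})$, and Lemma~\ref{lemconstructionsiegel} gives each bracket $=O(z^K)$; (iii) follows immediately, with no cancellation argument needed. Your $m=1$ sanity check is precisely the one-variable specialization of this identity, with $b_{N,N}=\varphi_{1,1}^{\,N}$.
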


\begin{Remark}\label{remeff3}
The constant $c$ in part $(iii)$ can be made effective, using the results of \cite{BCY}.
\end{Remark}
\begin{proof} Consider the differential system
\begin{equation}\label{eqsysdiffnv}
 a'_k(z) = -S_{1,k}(z)a_1(z)- \ldots - S_{m,k}(z)a_m(z)
\mbox{ for any } k\in\unm.
\end{equation}
Since the system~\eqref{eqsysdiffinitial} of \S \ref{secnotations} has no non-zero finite singularity, all rational functions $S_{\ell,k}(z)$ belong to $\Q[z,1/z]$ and the system~\eqref{eqsysdiffnv} has no non-zero finite singularity. In particular, there exists a fundamental matrix of solutions $(\varphi_{k,l}(z))_{1\leq k,l\leq m}$ such that $\varphi_{k,l}(1)$ is equal to the Kronecker symbol $\delta_{k,l}$. Let $\vr_1,\ldots,\vr_m$ be independent variables, and put $a_k(z) = \sum_{l=1}^m \vr_l \varphi_{k,l}(z)$ for $k\in\unm$. Then $ (a_1(z),\ldots,a_m(z))$ is a solution of the system~\eqref{eqsysdiffnv}.

Consider the vector $\overline Y(z) = (\overline y_\lambda(z))_{\lambda\in\Omega}$ defined by:
\begin{equation} \label{eqdefylb}
\overline y_\lambda(z) =
\begin{cases}
 a_1(z)^{\lambda_1} \ldots a_m(z)^{\lambda_m} \; \mbox{ if } | \lambda | = N, \\
 a_1(z)^{\lambda_1} \ldots a_m(z)^{\lambda_m} ( 1+ a_1(z) f_1(z) + \ldots + a_m(z) f_m(z))\;\mbox{ if } | \lambda | = N-1.
\end{cases}
\end{equation}
Each of these functions is a polynomial in the variables $\vr_1,\ldots,\vr_m$, with coefficients that depend on $z$; all monomials that appear in this expression have total degree $N-1$ or $N$. Therefore we have
\begin{equation}\label{eqdefycapa}
 \overline Y(z) = \sum_{\capa\in\Omega} \vr_1^{\capa_1} \ldots \vr_m^{\capa_m} Y_\capa(z)
\end{equation}
and this expression defines functions $Y_\capa(z) $ independent from $\vr_1,\ldots,\vr_m$. We shall be interested in these functions only when $\capa\in\Theta$, i.e. $|\capa|=N$.

\bigskip

To prove part $(i)$, we deduce from $\varphi_{k,l}(1)=\delta_{k,l}$ that $ a_k(1)=\vr_k$, and Eq.~\eqref{eqdefylb} yields
\begin{equation}\label{eqcalculybar}
 \overline y_\lambda(1) =
\begin{cases}
 \vr_1^{\lambda_1} \ldots \vr_m^{\lambda_m} \; \mbox{ if } | \lambda | = N, \\
 \vr_1^{\lambda_1} \ldots \vr_m^{\lambda_m} ( 1+ \vr_1 f_1(1)+\ldots+ \vr_m f_m(1)) \;\mbox{ if } | \lambda | = N-1.
\end{cases}
\end{equation}
Given $\capa\in\Theta$, we write $Y_\capa(1) = (z_{\capa,\lambda})_{\lambda\in\Omega}$. Then Eq.~\eqref{eqdefycapa} shows that $z_{\capa,\lambda}$ is the coefficient of $ \vr_1^{\capa_1} \ldots \vr_m^{\capa_m}$ in the expression of $ \overline y_\lambda(1) $. Using Eq.~\eqref{eqcalculybar}, we obtain
$$ z_{\capa,\lambda} = 
\begin{cases}
 1 \; \mbox{ if } \lambda=\capa, \\
 f_j(1) \; \mbox{ if } \lambda=\capa - e_j \mbox{ for some } j\in\unm,\\
 0 \; \mbox{ otherwise. }
\end{cases}
$$
By definition of $Z_\capa$, this means $Y_\capa(1) =Z_\capa $ and concludes the proof of part $(i)$.

Part $(ii)$ follows easily from part $(i)$. Indeed we consider the matrix $M\in M_{\omega,\theta}(\C)$ with columns $Z_\capa$, $\capa\in\Theta$. We may assume that the bijective map $\unomega\to \Omega$ we have chosen in \S \ref{secnotations} maps $\untheta$ to $\Theta$: it allows us to identify $\Theta$ and $\untheta$. Then by definition on the $Z_\capa$, we have $M = \left(\begin{array}{c}I \\ M' \end{array}\right)$ for some matrix $M'\in M_{\omega-\theta,\theta}(\C)$, where $I\in M_{ \theta}(\C)$ is the identity matrix. Therefore $M$ has rank $\theta$, and the vectors $Z_\capa$ are linearly independent over $\C$. Using part $(i)$, this concludes the proof of $(ii)$.

Let us prove parts $(iii)$ and $(iv)$ now. For brevity we let $\rho^\capa=\rho_1^{\capa_1}\ldots\rho_m^{\capa_m} $ and define $a(z)^\lambda$ in an analogous way. We have:
\begin{align*}
 \sum_{\capa\in\Omega}\rho^\capa R(Y_\capa)(z)
 &= R(\overline Y)(z) = \sum_{\lambda\in\Omega} P_\lambda(z)\overline y_\lambda(z)\mbox{ using Eqns.~\eqref{eqdefR} and~\eqref{eqdefycapa}}\\
 &= \sum_{\lambda\in\Theta} P_\lambda(z) a(z)^\lambda + \sum_{\lambda\in\Omega\setminus \Theta} P_\lambda(z) a(z)^\lambda \Big(1+\sum_{j=1}^m a_j(z) f_j(z)\Big) \;\mbox{ by Eq.~\eqref{eqdefylb}}\\
 &= \sum_{\lambda\in\Theta} a(z)^\lambda \Big(P_\lambda(z) +\sum_{j=1}^m P_{\lambda-e_j}(z) f_j(z)\Big) + \sum_{\lambda\in\Omega\setminus \Theta} P_\lambda(z) a(z)^\lambda. 
\end{align*}
Now recall that $a_k(z) = \sum_{l=1}^m \vr_l \varphi_{k,l}(z)$. In the previous expression, we fix $\capa\in\Theta$ and identify the coefficients of $\rho^\capa$ in both sides. Since the second term of the right hand side is homogeneous of degree $N-1$, whereas $|\capa|=N$, it does not contribute and we have
\begin{equation}\label{equtilepropycapa}
R(Y_\capa)(z) = \sum_{\lambda\in\Theta} b_{\lambda,\capa}(z) \Big(P_\lambda(z) +\sum_{j=1}^m P_{\lambda-e_j}(z) f_j(z)\Big) 
\end{equation}
where $ b_{\lambda,\capa}(z) $ is the coefficient of $\rho^\capa$ in the expansion of $ a(z)^\lambda $. This coefficient $ b_{\lambda,\capa}(z) $ is an explicit homogeneous polynomial of degree $N$ (with constant integer coefficients) in the functions $\varphi_{k,l}(z)$. 

Now denote by $Z'=SZ$ the differential system~\eqref{eqsysdiffinitial} of \S \ref{secnotations}. It has 
 at worst a regular singularity at 0, and therefore admits a fundamental matrix of solutions $M(z)$ with coefficients 
 in the Nilsson class at 0. Then $\tra M(z)^{-1}$ is a fundamental matrix of solutions of the (dual) differential system $Y'=-\, \tra SY$. Removing the first coordinate of the solutions yields a fundamental matrix of solutions of the differential system~\eqref{eqsysdiffnv}, all of which coefficients 
are in the Nilsson class at 0. Accordingly all $\varphi_{k,l}$ and all $ b_{\lambda,\capa} $ belong to the Nilsson class at 0, and so does $R(Y_\capa)$ using Eq.~\eqref{equtilepropycapa}: this proves part $(iv)$. Moreover, 
 there exists a constant $c$, which depends only on this system, such that $\varphi_{k,l}(z) = O(z^{-c})$ as $z\to 0$. Therefore we have $ b_{\lambda,\capa}(z)= O(z^{-cN})$ for any $\lambda,\capa\in\Theta $. Using Eq.~\eqref{equtilepropycapa} and Lemma~\ref{lemconstructionsiegel} this concludes the proof of part $(iii)$.

\medskip

To conclude the proof of Proposition~\ref{propycapa}, let us prove that $Y'_\capa=AY_\capa$ for any $\capa\in\Theta$. Since $A$ does not depend on the $\rho_j$, it is enough using Eq.~\eqref{eqdefycapa} to prove that $\overline Y' = A\overline Y$.
To begin with, Eq.~\eqref{eqdefylb} yields for any $\lambda\in\Theta$:
\begin{align*}
 \overline y_\lambda'(z) 
 &= - \sum_{j=1}^m \lambda_j a(z)^{\lambda - e_j} \sum_{l=1}^m S_{l,j}(z)a_l(z) \; \mbox{ using Eq.~\eqref{eqsysdiffnv}}\\
 &= - \sum_{j=1}^m \sum_{l=1}^m \lambda_j S_{l,j}(z) a(z)^{\lambda - e_j+e_l}\\
 &= \sum_{\capa\in \Omega} A_{\lambda,\capa}(z) \overline y_\capa(z) \; \mbox{ using Eq.~\eqref{eqdefA}.}
\end{align*}
To prove the same formula for $\lambda\in\Omega\setminus \Theta$, we notice that, using Eqns.~\eqref{eqsysdiffnv} and~\eqref{eqsysdiffinitial}:
\begin{align*}
 \frac{\dd}{\dd z} \Big( 1+&\sum_{p=1}^m a_p(z)f_p(z)\Big) 
 = \sum_{p=1}^m a'_p(z)f_p(z) + \sum_{l=1}^m a_l(z)f'_l(z)\\
 &= - \sum_{p=1}^m \sum_{l=1}^m S_{l,p}(z) a_l(z)f_p(z) + \sum_{l=1}^m a_l(z) ( S_{l,0}(z)+ \sum_{p=1}^m S_{l,p}(z)f_p(z))\\
 &= \sum_{l=1}^m S_{l,0}(z) a_l(z) .
\end{align*}
This gives for any $\lambda\in\Omega\setminus \Theta$, using Eqns.~\eqref{eqdefylb} and~\eqref{eqsysdiffnv}:
\begin{align*}
 \overline y_\lambda'(z) 
 &= a(z)^{\lambda } \sum_{p=1}^m S_{p,0}(z) a_p(z) + \sum_{j=1}^m \lambda_j a(z)^{\lambda - e_j} a'_j(z) ( 1+\sum_{p=1}^m a_p(z)f_p(z) ) \\
 &= \sum_{p=1}^m S_{p,0}(z) a(z)^{\lambda + e_p } - \sum_{j=1}^m \lambda_j a(z)^{\lambda -e_j } \sum_{l=1}^m S_{l,j}(z)a_l(z) ( 1+\sum_{p=1}^m a_p(z)f_p(z) ) \\
 &= \sum_{p=1}^m S_{p,0}(z) \overline y_{\lambda+e_p} (z) - \sum_{j=1}^m \sum_{l=1}^m \lambda_j 
 S_{l,j}(z) \overline y_{\lambda-e_j+e_l} (z) \\
 &=\sum_{\capa\in \Omega} A_{\lambda,\capa}(z) \overline y_\capa(z) \; \mbox{ using Eq.~\eqref{eqdefA}.}
\end{align*}
This concludes the proof of Proposition~\ref{propycapa}.
\end{proof}

\subsection{Application of Shidlovskii's lemma: proof of Proposition~\ref{propmatriceinversible}} \label{secpreuvematriceinversible}

In this section we prove Proposition~\ref{propmatriceinversible} stated in \S \ref{construction}, as consequence of Theorem~\ref{thnondeg} proved in \S \ref{subsecnondeg} (using also Theorem~\ref{thrat} that will be stated and proved in \S \ref{sectech}). We apply Theorem~\ref{thnondeg} in the setting of \S \ref{secsystemediff}, namely with $q=\omega$ and the differential system $Y'=AY$ where $A$ is defined by Eq.~\eqref{eqdefA}. All vector spaces, dimensions and other notions of linear algebra are over $\C$. 

We let $\calF = \Span\{Y_\capa, \, \capa\in\Theta\}$ where the solutions $Y_\capa$ are given by Proposition~\ref{propycapa}. They are in the Nilsson class at 0, and are linearly independent over $\C$ so that $\dim\calF = \theta$. Given {\em any} family of polynomials $P_\capa(z)\in\Qbar[z]$, for $\capa\in\Omega$, we denote by $\calRP $ the space of solutions $Y$ of $Y'=AY$ such that $R(Y)(z)=\sum_{\capa\in\Omega}P_\capa(z)y_\capa(z)$ is identically zero.

\medskip

We consider the linear map $\evun: Y \mapsto Y(1)$, from the space of solutions of $Y'=AY$ to $\C^\omega$. It is bijective because 1 is not a singularity of this differential system, defined by~\eqref{eqdefA} with $S_{l,j}(z) \in\Q[z,1/z]$ for any $l,j$. We let $F=\evun(\calF)$ and $R=\evun(\calRP)$. The space $F$ is spanned by the vectors $Z_\capa$, $\capa\in\Theta$, by Proposition~\ref{propycapa} $(i)$. The space $R$ is defined over $\Qbar$ using Lemma~\ref{lemrrationnel}.

By construction (see \S \ref{secnotations}) the functions 1, $f_1(z)$, \ldots, $f_m(z)$ are linearly independent over $\C(z)$, and they make up a vector solution of a differential system of order 1 with coefficients in $\Q[z,1/z]$, given by Eq.~\eqref{eqsysdiffinitial}. Since $1$ is not a singularity of this system, Beukers' refinement \cite[Corollary 1.4]{beukers} of the Siegel-Shidlovskii theorem 
shows that the values at 1 of these functions are linearly independent over $\Qbar$. Therefore Theorem~\ref{thrat} applies with $\xi_1=f_1(1)$, \ldots, $\xi_m=f_m(1)$. It shows that if $R$ is proper, then 
$$\dim R > \Big( 2 - \frac{m-1}{N+m-1}\Big) \dim(F\cap R),$$
so that 
$$
\frac{\dim(\calF\cap\calRP)}{\dim \calF} < \frac{\dim \calRP }{q}
$$
since $\dim\calF = \theta$ and $ q= \omega$. Therefore $\calF$ is nondegenerate over $\Qbar$ (as defined in \S \ref{subsecnondeg}). 

\medskip

Now we choose $ n=M-1$ and consider the polynomials $P_\capa(z)$, $\capa\in\Omega$, defined in Lemma~\ref{lemconstructionsiegel}.
They belong to $\Qbar[z]$, have degree at most $n$, are not all zero, and for any $Y \in\calF$ the remainder $R(Y)(z) $ belongs 
 to the Nilsson class at $0$ and vanishes at $0$ with order at least $K- c N = \lfloor \frac{(q-\ewa)(n+1)}{\dim\calF}\rfloor - cN \geq \frac{(q-\ewa)n}{\dim\calF} - w $ with $w = cN+1$, using Proposition~\ref{propycapa}. 
 Assuming $\ewa \leq 1/(q+1)$ and $n$ large enough (in terms of $f_1, \ldots, f_m$ and $N$),
Theorem~\ref{thnondeg} yields $\rk (P_{k,\capa}(1))_{\capa\in\Omega, 1\leq k \leq \lfloor \ewa M \rfloor+C_4} = \omega$ with $C_4 = \cstun+(cN+1)q$. Using Eq.~\eqref{eqequivlemzeros} proved in \S \ref{secsystemediff},
this concludes the proof of Proposition~\ref{propmatriceinversible}.

\section{Proof of non-degeneracy}\label{sectech}

In this section we state and prove Theorem~\ref{thrat}, a key ingredient in the proof of Proposition~\ref{propmatriceinversible} given in \S \ref{secpreuvematriceinversible}. The application of this result is explained in \S \ref{secpreuvematriceinversible}: it allows us to prove that the subspace $\calF$ we are interested in is non-degenerate over $\Qbar$, as defined in \S \ref{subsecnondeg}. This is a crucial assumption in our multiplicity estimate (namely Theorem~\ref{thnondeg} proved in \S \ref{subsecnondeg}). 

\medskip

Theorem~\ref{thrat} is an independent result, for which we need only the following notation. 
We consider integers $m,N\geq 1$ and complex numbers $\xi_1,\ldots,\xi_m$; we assume that $1$, $\xi_1$, \ldots, $\xi_m$ are linearly independent over $\Qbar$. As in \S \ref{secnotations} we let $\Omega = \{ \capa\in\N^m, \, \ineg\}$ where $| \capa | = \capa_1+\ldots+\capa_m$, and $\omega=\Card\, \Omega = \binom{N+m-2}{m-1} + \binom{N+m-1}{m-1}$; we denote by $(e_j)_{1\leq j \leq m}$ the canonical basis of $\Z^m$, and by $(E_\capa)_{\capa\in\Omega}$ that of $\C^\Omega$. In other words, we have $E_\capa = (\delta_{\capa,\capa'})_{\capa'\in\Omega}$ where $ \delta_{\capa,\capa'}$ is the Kronecker's symbol.
We consider 
$$ Z_\capa = E_\capa +\sum_{j=1}^m \xi_j E_{\capa - e_j} \mbox{ for any }\capa\in\Theta = \{ \capa\in\N^m, \, | \capa | = N \}$$
with the convention that $ E_{\capa - e_j} = 0$ if $\capa - e_j\in\Z^m$ has at least one negative component (namely, if $\capa_j=0$); with $\xi_j=f_j(1)$ these are exactly the vectors $Z_\capa$ defined before Proposition~\ref{propycapa} in \S \ref{secsystemediff}. We denote by $F$ the subspace of $\C^\Omega$ generated by these vectors, namely 
$$F = \Span\{Z_\capa, \, \capa\in\Theta\}.$$
It is not difficult (see the proof of $(ii)$ in Proposition~\ref{propycapa} above) to see that the $ Z_\capa$ are linearly independent, so that $\dim F = \binom{N+m-1}{m-1}$, denoted by $ \theta$.

\begin{theo} \label{thrat} 
Let $R$ be a vector subspace of $\C^\Omega$ defined over $\Qbar$. Then we have:
$$\dim R \geq \Big( 2 - \frac{m-1}{N+m-1}\Big) \dim(F\cap R),$$
and equality holds if and only if $R=\{0\}$ or $R=\C^\Omega$.
\end{theo}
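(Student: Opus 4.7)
The plan is to reformulate the inequality as a rank estimate for a matrix whose entries lie in $\Qbar\{1, \xi_1, \ldots, \xi_m\}$, then exploit the $\Qbar$-linear independence of $1, \xi_1, \ldots, \xi_m$. Decomposing $\C^\Omega = \C^\Theta \oplus \C^{\Omega \setminus \Theta}$, one checks that $F$ is the graph of the $\C$-linear operator $T = \sum_{j=1}^m \xi_j T_j : \C^\Theta \to \C^{\Omega \setminus \Theta}$, where $(T_j x)_\lambda = x_{\lambda + e_j}$. Hence $F$ is precisely the kernel of the surjective $\C$-linear map $\Delta : \C^\Omega \to \C^{\Omega \setminus \Theta}$, $\Delta(v) := v_{\Omega \setminus \Theta} - T v_\Theta$. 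In particular $\dim(F \cap R) = \dim R - \dim_{\C} \Delta(R)$, so the claimed inequality is equivalent to $\dim_{\C} \Delta(R) \ge \dim R \cdot (\omega - \theta)/\omega$.

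Writing $R = R_{\Qbar} \otimes_{\Qbar} \C$, fix a $\Qbar$-basis $v^{(1)}, \ldots, v^{(r)}$ of $R_{\Qbar} \subset \Qbar^\Omega$ and set $u^{(k)} := \Delta(v^{(k)}) \in \C^{\Omega \setminus \Theta}$; each $u^{(k)}$ has entries in the $(m+1)$-dimensional $\Qbar$-subspace $\Qbar\{1, \xi_1, \ldots, \xi_m\}$ of $\C$. A first key observation is that the $u^{(k)}$ are $\Qbar$-linearly independent: any $\Qbar$-relation $\sum_k a_k u^{(k)} = 0$ would yield $\sum_k a_k v^{(k)} \in F \cap \Qbar^\Omega$, whose defining identities $v_\lambda = \sum_j \xi_j v_{\lambda + e_j}$ (for $\lambda \in \Omega\setminus\Theta$) combined with the hypothesis force $v = 0$. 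Splitting $v^{(k)} = (x^{(k)}, y^{(k)})$ and writing $u^{(k)} = 1 \cdot y^{(k)} + \sum_j \xi_j \cdot (-T_j x^{(k)})$ shows that all the $u^{(k)}$ lie in the distinguished $\omega$-dimensional $\Qbar$-subspace $\calI := \{(y, -T_1 x, \ldots, -T_m x) : x \in \Qbar^\Theta, y \in \Qbar^{\Omega \setminus \Theta}\}$ of the space $\Qbar\{1, \xi_j\} \otimes_{\Qbar} \Qbar^{\Omega \setminus \Theta}$, which evaluates onto the $(\omega - \theta)$-dimensional $\C$-space $\C^{\Omega \setminus \Theta}$.

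The hard step is the ensuing combinatorial linear-algebra statement: for every $\Qbar$-subspace $W \subset \calI$ of $\Qbar$-dimension $r$, the $\C$-span of its image in $\C^{\Omega \setminus \Theta}$ has $\C$-dimension at least $r(\omega - \theta)/\omega$. This compares the ``$\Qbar$-to-$\C$ compression'' of $W$ to that of the ambient $\calI$ (whose compression ratio is $\omega/(\omega - \theta)$); such a comparison fails for arbitrary linear parameterizations, so one must genuinely exploit the shift structure of the $T_j$ together with the $\Qbar$-linear independence hypothesis on $1, \xi_1, \ldots, \xi_m$. Once this bound is in hand, the inequality of the theorem follows directly. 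For the equality case, tracing through the argument shows that $\dim R = (\omega/\theta) \dim(F \cap R)$ forces the image of $R_{\Qbar}$ under $\Delta$ to saturate the ambient compression ratio; combined with the $\Qbar$-linear independence hypothesis, this saturation forces $R_{\Qbar} \in \{0, \Qbar^\Omega\}$, so $R = \{0\}$ or $R = \C^\Omega$.
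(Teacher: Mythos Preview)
Your reformulation is correct and elegant: identifying $F=\ker\Delta$ and rewriting the inequality as $\dim_\C\Delta(R)\ge r(\omega-\theta)/\omega$ with $r=\dim R$ is a clean way to set up the problem, and your observation that the $u^{(k)}$ are $\Qbar$-linearly independent (because $F\cap\Qbar^\Omega=\{0\}$) is exactly right.

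However, the proposal is not a proof: you explicitly isolate the ``hard step'' --- that every $r$-dimensional $\Qbar$-subspace $W\subset\calI$ evaluates to a $\C$-space of dimension at least $r(\omega-\theta)/\omega$ --- and then do not prove it. Since $\calI\cong\Qbar^\Omega$ via $(x,y)\mapsto(y,-T_1x,\ldots,-T_mx)$, this ``hard step'' is \emph{equivalent} to the inequality you are trying to establish; you have restated the theorem rather than proved it. You correctly note that such a compression bound fails for generic linear parameterizations and that one ``must genuinely exploit the shift structure of the $T_j$'', but you give no indication of how to do so. The same applies to the equality case: asserting that ``saturation forces $R_{\Qbar}\in\{0,\Qbar^\Omega\}$'' is not an argument without having the structure of the extremizers in hand.

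For comparison, the paper's proof does supply that missing mechanism, and it is substantial. It proceeds by induction on $m+N$, using two projections: restriction to $K=\Span\{E_\capa:\capa_m=0\}$ (which reduces $m$ to $m-1$) and the shift $\pi(E_\capa)=E_{\capa-e_m}$ onto $\C^{\Ombar}$ (which reduces $N$ to $N-1$). The induction yields two inequalities, \eqref{eqthrat1} and \eqref{eqthrat2}, which combine to the desired bound \emph{provided} one also has $\dim(F\cap R\cap K)\ge 2\dim(F\cap R)-\dim R$. Securing this last inequality is itself nontrivial: the paper first shows (Lemma~\ref{lemrat1}) that one may replace $\xi$ by $A\xi$ for any $A\in\GL_m(\Qbar)$, and then (Lemma~\ref{lemrat2}) uses a specialization argument along an algebraic curve in the Zariski closure of $\{\xi\}$ to arrange a favorable $\xi$. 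This is precisely where the ``shift structure of the $T_j$'' and the $\Qbar$-linear independence hypothesis are exploited, and it is the content your proposal is missing.
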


By {\em defined over $\Qbar$}, or {\em rational over $\Qbar$}, we mean that $R$ has a $\mathbb C$-basis consisting in vectors in $\Qbar^\Omega$. This is equivalent to the existence of a system of linear equations with coefficients in $\Qbar$ that defines $R$. We point out that an inequality on dimensions, such as the one of Theorem~\ref{thrat}, is reminiscent of the notion of $(e,j)$-irrationality introduced in \cite{Joseph1} and \cite{Joseph2}.

\subsection{Two lemmas} \label{ssec:twolemmas}

The data of Theorem~\ref{thrat} depend only on $m,N\geq 1$ and on $\xi = ( \xi_1,\ldots,\xi_m)\in\C^m$; in the proof we shall often deduce Theorem~\ref{thrat} for some triples $(m,N,\xi)$ from the same statement for other triples. It will always be assumed, implicitly or explicitly, that $1$, $\xi_1$, \ldots, $\xi_m$ are linearly independent over $\Qbar$. We first prove two lemmas; recall that we identify tuples in $\C^m$ to column matrices in $M_{m,1}(\C)$. 

\begin{lem}\label{lemrat1} Let $m,N\geq 1$ and $\xi = ( \xi_1,\ldots,\xi_m)\in\C^m$, with $1$, $\xi_1$, \ldots, $\xi_m$ linearly independent over $\Qbar$. Let $A\in\GL_m(\Qbar)$; define $\xi' = ( \xi'_1,\ldots,\xi'_m)$ by $ \xi' = A\, \xi$.

If Theorem~\ref{thrat} holds for $(m,N,\xi)$ with any subspace $R$ defined over $\Qbar$ of a given dimension $\vr$, then it also does for $(m,N,\xi')$.
\end{lem}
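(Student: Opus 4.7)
My plan is to build a $\Qbar$-linear automorphism $\Phi$ of $\C^\Omega$ that sends $F$ to the analogous subspace $F'$ associated with $\xi'=A\xi$. Once such a $\Phi$ is available, the lemma follows formally: given any $\Qbar$-rational subspace $R'\subset\C^\Omega$ of dimension $\vr$, the subspace $R:=\Phi^{-1}(R')$ is again $\Qbar$-rational of dimension $\vr$, and $\Phi$ restricts to an isomorphism $F\cap R\xrightarrow{\sim}F'\cap R'$, so $\dim(F\cap R)=\dim(F'\cap R')$. Applying the assumed case of Theorem~\ref{thrat} for $(m,N,\xi)$ to $R$ then yields the desired statement for $R'$, with the equality cases $R\in\{0,\C^\Omega\}$ corresponding under $\Phi$ to $R'\in\{0,\C^\Omega\}$.

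To construct $\Phi$, I identify $\C^\Omega$ with the space $\calV_N\oplus\calV_{N-1}$ of pairs of homogeneous polynomials in $u_1,\ldots,u_m$ of degrees $N$ and $N-1$, via the $\Qbar$-linear isomorphism $E_\capa\longleftrightarrow u^\capa/\capa!$. The crucial observation is the identity $\partial_{u_j}(u^\capa/\capa!)=u^{\capa-e_j}/(\capa-e_j)!$ (with the usual zero convention), which shows that under this identification the defining vector $Z_\capa=E_\capa+\sum_j\xi_jE_{\capa-e_j}$ becomes $P+D_\xi P$ with $P=u^\capa/\capa!$ and $D_\xi:=\sum_{j=1}^m\xi_j\partial_{u_j}$ the directional derivative in direction $\xi$. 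In other words, $F$ is exactly the graph of the $\C$-linear map $D_\xi:\calV_N\to\calV_{N-1}$, and similarly $F'$ is the graph of $D_{\xi'}$.

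I then take $\Phi$ to be the $\Qbar$-linear change of variables $u\mapsto A^{-1}u$; concretely, $\Phi(P)(u):=P(A^{-1}u)$ on each homogeneous component, which is an automorphism since $A\in\GL_m(\Qbar)$. The chain rule gives $\partial_{u_i}(\Phi P)=\sum_k(A^{-1})_{ki}\,\Phi(\partial_{u_k}P)$, and combined with the identity $A^{-1}\xi'=\xi$ this yields the intertwining relation $D_{\xi'}\circ\Phi=\Phi\circ D_\xi$. Therefore $\Phi$ sends the graph of $D_\xi$ into the graph of $D_{\xi'}$, i.e.\ $\Phi(F)\subseteq F'$; since $\dim F=\dim F'=\theta$, this inclusion is an equality. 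The only delicate point in the argument is spotting the polynomial description of $F$ under which the combinatorial change $\xi\mapsto A\xi$ is implemented by the usual action of $\GL_m$ on polynomials by substitution; after that, the intertwining is a one-line chain-rule calculation.
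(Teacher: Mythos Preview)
Your proof is correct and takes a genuinely different, more conceptual route from the paper. The paper proceeds by decomposing $A$ into a product of elementary matrices (permutations, diagonal scalings, and transvections) and in each of the three cases writes down an explicit $\Qbar$-automorphism $f$ of $\C^\Omega$ with $f(F')=F$; the transvection case in particular requires a binomial identity to verify that $f(Z'_\capa)$ lands in $F$. Your approach bypasses this case analysis by recognising $F$ as the graph of the directional derivative $D_\xi:\calV_N\to\calV_{N-1}$ under the identification $E_\capa\leftrightarrow u^\capa/\capa!$, after which the substitution action of $\GL_m(\Qbar)$ on polynomials furnishes the required automorphism $\Phi$ uniformly for all $A$, the intertwining $D_{\xi'}\circ\Phi=\Phi\circ D_\xi$ being a one-line chain-rule computation. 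The paper's argument is more elementary in that it needs no structural interpretation, only explicit matrix manipulations; yours is shorter, treats all $A$ at once, and explains \emph{why} such an automorphism should exist. Both arguments have the same logical skeleton (transport $R'$ back to $R$ via a $\Qbar$-rational isomorphism carrying $F'$ to $F$), so the equality-case discussion and the conclusion are identical.
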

\begin{proof} Decomposing $A$ into a product of simpler matrices, we may restrict to the following 3 cases.

$\bullet$ Case 1: $\xi'_j = \xi_{\sigma(j)}$ for any $j\in\unm$, with $\sigma\in \Sm$. In this case Lemma~\ref{lemrat1} is obvious, by permuting the coordinates in $\N^m$ and accordingly in $\C^\Omega$.

$\bullet$ Case 2: $\xi'_{j_0} =\lambda \xi_{j_0}$ for some $\lambda\in\Qbar\etoile$ and $j_0\in\unm$, and $\xi'_j=\xi_j$ for any $j\in\unm\setminus\{j_0\}$. Let $f:\C^\Omega\to\C^\Omega$ be the (bijective) linear map defined by $f( E_\capa ) = \lambda^{\capa_{j_0}} E_\capa$ for any $\capa\in\Omega$. We denote by $(Z_\capa)$ and $F$ (resp. $(Z'_\capa)$ and $F'$) the data associated with $\xi$ (resp. $\xi'$). Then we have for any $\capa\in\Theta$:
\begin{multline*}
f(Z'_\capa)=f(E_\capa)+\sum_{j=1}^m \xi'_j f(E_{\capa - e_j}) 
\\
= \lambda^{\capa_{j_0}} E_\capa + \lambda \xi_{j_0} \lambda^{\capa_{j_0}-1} E_{\capa - e_{j_0}} + \sum_{j\neq j_0} \xi_j \lambda^{\capa_{j_0}} E_{\capa - e_j} = \lambda^{\capa_{j_0}} f(Z_\capa)
\end{multline*}
so that $f(F')=F$. Let $R'$ be a subspace of $\C^\Omega$ defined over $\Qbar$, with $\dim R'=\vr$. Taking $R=f(R')$ we have $\dim R =\dim R'$, $\dim (F\cap R) =\dim (F'\cap R')$ and $R$ is defined over $\Qbar$. Therefore Theorem~\ref{thrat} applied to $(m,N,\xi)$ with $R$ shows that Theorem~\ref{thrat} holds for $(m,N,\xi')$ with $R'$.

$\bullet$ Case 3: $\xi'_{j_0} = \xi_{j_0} + \xi_{j_1}$ with distinct $j_0,j_1\in\unm$, and $\xi'_j=\xi_j$ for any $j\in\unm\setminus\{j_0\}$. We consider the linear map $f:\C^\Omega\to\C^\Omega$ given by
$$ f(E_\capa) = \sum_{t=0} ^{ \capa_{j_0} } \binom{t+ \capa_{j_1}}{ \capa_{j_1}} E_{\capa-t e_{j_0} + t e_{j_1}} \mbox{ for any } \capa\in\Omega.
$$ 
It is bijective because $f(E_\capa)-E_\capa$ is a linear combination of the $E_{\capa'}$ with $\capa'_{j_0}<\capa_{j_0}$. For any $\capa\in\Theta$ we have:
\begin{align}
f( &E_{\capa - e_{j_1}} ) + f( E_{\capa - e_{j_0}} )
 = \sum_{t=0} ^{ \capa_{j_0} } \binom{t+ \capa_{j_1}-1}{ \capa_{j_1}-1} E_{\capa-t e_{j_0} +( t-1) e_{j_1}} 
 + \sum_{t=0} ^{ \capa_{j_0} -1} \binom{t+ \capa_{j_1}}{ \capa_{j_1}} E_{\capa-(t+1) e_{j_0} + t e_{j_1}} \nonumber\\
 &= \sum_{t=0} ^{ \capa_{j_0} } \bigg( \binom{t+ \capa_{j_1}-1}{ \capa_{j_1}-1} + \binom{t+ \capa_{j_1}-1}{ \capa_{j_1}} \bigg)
 E_{\capa-t e_{j_0} +( t-1) e_{j_1}} \;\mbox{($t'=t+1$ in the second sum)} \nonumber\\
 &= \sum_{t=0} ^{ \capa_{j_0} } \binom{t+ \capa_{j_1}}{ \capa_{j_1}} E_{\capa-t e_{j_0} +( t-1) e_{j_1}} \label{eqaux}
\end{align}
so that
\begin{align*}
 f(Z'_\capa)
 &=f(E_\capa) + \sum_{j=1}^m \xi_j f(E_{\capa - e_j}) + \xi_{j_1} f( E_{\capa - e_{j_0}} ) \\
 &= \sum_{t=0} ^{ \capa_{j_0} } \binom{t+ \capa_{j_1}}{ \capa_{j_1}} \Big( E_{\capa-t e_{j_0} + t e_{j_1}} + \sum_{j\neq j_1} \xi_j E_{\capa-e_j-t e_{j_0} + t e_{j_1}}\Big) + \xi_{j_1} \Big( f( E_{\capa - e_{j_1}} ) + f( E_{\capa - e_{j_0}} )\Big) \\
 &= \sum_{t=0} ^{ \capa_{j_0} } \binom{t+ \capa_{j_1}}{ \capa_{j_1}} Z_{\capa-t e_{j_0} + t e_{j_1}}\in F, \mbox{ using Eq.~\eqref{eqaux}.}
 \end{align*}
Therefore $f(F')=F$, and we deduce the result as in Case 2. This concludes the proof of Lemma~\ref{lemrat1}.\end{proof}

\begin{lem}\label{lemrat2} Let $m,N\geq 1$. We consider the subspace
$$K = \Span \{E_\capa, \, \capa\in\Omega, \, \capa_m=0\}.$$
Assume that Theorem~\ref{thrat} holds for any $\xi\in\C^m$ and any $R$ defined over $\Qbar$ such that 
$$ \dim(F\cap R\cap K)\geq 2\dim(F\cap R) - \dim(R).$$
Then Theorem~\ref{thrat} holds for any $\xi\in\C^m$ and any $R$ defined over $\Qbar$.
\end{lem}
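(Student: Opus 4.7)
The plan is to use Lemma~\ref{lemrat1} as a change-of-variables device: for any $A\in\GL_m(\Qbar)$, setting $\xi'=A\xi$ and $R'=f_A^{-1}(R)$ (where $f_A$ is the $\Qbar$-rational bijection of $\C^\Omega$ constructed in the proof of Lemma~\ref{lemrat1}), one obtains a pair $(\xi',R')$ with $R'$ still defined over $\Qbar$, $\dim R'=\dim R$, and $\dim(F'\cap R')=\dim(F\cap R)$ (since $f_A(F')=F$ and $f_A(R')=R$). Only the third quantity $\dim(F'\cap R'\cap K)=\dim(F\cap R\cap f_A(K))$ depends on the choice of $A$. By Lemma~\ref{lemrat1}, Theorem~\ref{thrat} for $(\xi',R')$ gives Theorem~\ref{thrat} for $(\xi,R)$. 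The goal is therefore to find some $A\in\GL_m(\Qbar)$ for which the transformed triple satisfies the dimensional hypothesis appearing in Lemma~\ref{lemrat2}.

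First I would dispose of the case $2\dim(F\cap R)\leq\dim R$, where the right-hand side $2\dim(F\cap R)-\dim R$ is non-positive and the hypothesis holds automatically. Under the remaining assumption $\dim(F\cap R)>\dim R/2$, the task reduces to producing $A\in\GL_m(\Qbar)$ with
\[
\dim\bigl(F\cap R\cap f_A(K)\bigr)\geq 2\dim(F\cap R)-\dim R.
\]
I would approach this via an extremal selection: pick $A$ maximizing $\dim(F\cap R\cap f_A(K))$, and argue that the maximum achieves the required bound. Supposing the contrary, one should construct an improvement of $A$ using a shear $\xi_{j_0}\mapsto\xi_{j_0}+\xi_{j_1}$ (Case~3 of Lemma~\ref{lemrat1}, with explicit combinatorial formula given around Eq.~\eqref{eqaux}). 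The existence of an improving shear would follow from the $\Qbar$-rationality of $R$ combined with the $\Qbar$-linear independence of $1,\xi_1,\ldots,\xi_m$: a vector of $F\cap R$ failing to lie in $f_A(K)$ yields, through the defining $\Qbar$-equations of $R$, a nontrivial $\Qbar$-combination of the $\xi_j$'s which a shear can absorb, enlarging $F\cap R\cap f_A(K)$ and contradicting maximality.

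The main obstacle is making the improvement step precise, because the action of $f_A$ in the shear case is combinatorially delicate (cf.\ Eq.~\eqref{eqaux}), and tracking how $f_A(K)$ shifts under repeated shears requires careful bookkeeping. A cleaner route that I would explore is to argue dually: $R^\perp\subset(\C^\Omega)^*$ inherits a $\Qbar$-basis of linear forms, and the restriction of any such form to $F$ is governed by the $\Qbar$-linear independence of $1,\xi_1,\ldots,\xi_m$ (forcing non-vanishing restrictions in much the same way as in the computation of $F\cap K$ sketched in \S\ref{secsystemediff}). This dual viewpoint should transform the extremal argument into a more transparent linear-algebra statement about the interaction of $R^\perp$, $F^\perp$, and $K^\perp$ inside $(\C^\Omega)^*$, and thereby close the reduction.
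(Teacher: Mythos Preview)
Your proposal has a genuine gap at the improvement step. You correctly reduce, via Lemma~\ref{lemrat1}, to finding some $A\in\GL_m(\Qbar)$ with $\dim(F'\cap R'\cap K)\geq 2\dim(F'\cap R')-\dim R'$, and the extremal selection is at least well-posed since the dimension is bounded. But the assertion that a sub-maximal $A$ can always be improved by a single shear is not substantiated: you give no mechanism by which a vector of $F\cap R$ lying outside $f_A(K)$ produces, through the $\Qbar$-equations defining $R$, a specific shear that \emph{strictly} enlarges $F\cap R\cap f_A(K)$. The formula around Eq.~\eqref{eqaux} describes how $f_A$ acts on the basis $E_\capa$, but it does not yield any monotonicity of $\dim(F\cap R\cap f_A(K))$ under shears, and nothing in your argument prevents one shear from pushing vectors out of $f_A(K)$ while pulling others in. You acknowledge this obstacle yourself and then pivot to a dual formulation that is equally unspecified. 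As written, the proposal is a strategy outline with the decisive step missing.

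The paper's proof takes a quite different route and does not proceed by discrete improvement. It makes a single, geometrically motivated choice of $g\in\GL_m(\Qbar)$: one forms the Zariski closure $\Vz$ of $\{\xi\}$ over $\Qbar$, picks an algebraic curve $\calC\subset\Vz$ avoiding low-degree hypersurfaces, parametrizes a real branch by algebraic functions $\chi(z)=(\chi_1(z),\dots,\chi_m(z))$, and chooses $g$ to send the vector of leading coefficients $(\varpi_1,\dots,\varpi_m)$ (as $z\to+\infty$) to $(0,\dots,0,1)$. After this normalization the contradiction is obtained by a \emph{continuous} limiting argument: assuming $d'<2d-\dim R$ (with $d=\dim(F\cap R)$, $d'=\dim(F\cap R\cap K)$), a dimension count forces, for every large algebraic $a$, a nonzero intersection between $F_{\chi(a)}\cap R$ and a fixed complement $W$ of $F_{\chi(a_0)}\cap R\cap K$ inside $F_{\chi(a_0)}\cap R$. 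Encoding these intersections as the kernel of a matrix of algebraic functions shows that matrix has deficient rank identically in $z$; letting $z\to+\infty$ and using $(\varpi_1,\dots,\varpi_m)=(0,\dots,0,1)$ then extracts a nonzero vector $T\in W\cap K$, contradicting the choice of $W$. The passage to the limit is precisely where the special choice of $g$ is used, and it has no counterpart in your shear-by-shear scheme.
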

\begin{proof} We start with any $\xi = ( \xi_1,\ldots,\xi_m)\in\C^m$ such that $1$, $\xi_1$, \ldots, $\xi_m$ are linearly independent over $\Qbar$. Let $R$ be a subspace of $\C^\Omega$ defined over $\Qbar$, and put $\vr=\dim R$. We shall construct $g\in\GL(\C^m)$ defined over $\Qbar$ (i.e., whose matrix in the canonical basis of $\C^m$ has coefficients in $\Qbar$), such that $\xi'=g(\xi)$ satisfies $ \dim(F'\cap R'\cap K)\geq 2\dim(F'\cap R') - \dim(R')$ for any subspace $R'$ of $\C^\Omega$ defined over $\Qbar$ of dimension $\vr$, where $F'$ is associated with $\xi'=g(\xi)$ as before the statement of Theorem~\ref{thrat}. Then Lemma~\ref{lemrat1} shows that if Theorem~\ref{thrat} holds for $\xi'$, then it does for $\xi$.

Denote by $\Vz$ the Zariski closure of $\{\xi\}$ in the affine space $\mathbb A_{\Qbar}^m$ over $\Qbar$, i.e. the smallest subset of $\C^m$, defined by polynomial equations with coefficients in $\Qbar$, that contains $\xi$. In more concrete terms, $\Vz$ is the set of all $(z_1,\ldots,z_m)\in\C^m$ such that $P(z_1,\ldots,z_m)=0$ for any $P\in\Qbar[X_1,\ldots,X_m]$ such that $P(\xi_1,\ldots,\xi_m)=0$. Since $\xi_1$ is transcendental, $\Vz$ has dimension at least 1. There exists an algebraic curve $\calC$, defined by polynomial equations with coefficients in $\Qbar$, contained in $\Vz$ but in no hypersurface defined over $\Qbar $ of degree less than or equal to $\theta$ (with possible exceptions for such hypersurfaces that contain $\Vz$). 

There exists $j_0\in\unm$ such that the $j_0$-th projection $\calC\to\C$, $(\chi_1,\ldots,\chi_m)\mapsto \chi_{j_0}$, has infinite image; then this image contains all real numbers greater than some $M_0$. Parametrizing a branch of $\calC$, we obtain algebraic functions over $\Qbar(z)$, denoted by $\chi_1(z),\ldots,\chi_m(z)$, such that $ ( \chi_1(a),\ldots,\chi_m(a))\in\calC$ for any real $a\geq M_0$, and 
$| \chi_{j_0}(a)|\to\infty$ as $a\to+\infty$ with $a\in\R$. Their asymptotic behaviour as $a\to+\infty$ is given by $ \chi_{j}(a) \sim \varpi_j^0 a^{d_j}$ with $\varpi_j^0\in\Qbar\etoile$ and $d_j\in\Q$; we have $d_{j_0}>0$. Let $D= \max(d_1,\ldots,d_m)>0$, and put $ \varpi_j = \varpi_j^0$ for any $j\in\unm$ such that $d_j=D$, and $ \varpi_j = 0$ otherwise. In this way, for any $j\in\unm$ we have
\begin{equation}\label{eqrat1}
 \varpi_j = \lim_{a\in\R, \, a\to+\infty} a^{-D} \chi_j(a) \quad \mbox{ with } D>0.
 \end{equation}

We can now construct a bijective linear map $g:\C^m\to\C^m$ defined over $\Qbar$ such that $g(\varpi_1,\ldots,\varpi_m)=(0,\ldots,0,1)$. We let $\xi'=(\xi'_1,\ldots,\xi'_m) = g(\xi)$; then $1$, $\xi'_1$, \ldots, $\xi'_m$ are linearly independent over $\Qbar$. We denote by $F'$, $\calC'$, $\varpi'_j$, \ldots the objects defined as above, starting from $\xi'$ instead of $\xi$. Then we may choose $\calC'=g(\calC)$, $ ( \chi'_1(z),\ldots,\chi'_m(z))=g ( \chi_1(z),\ldots,\chi_m(z))$ so that $D'=D$ and $(\varpi'_1,\ldots,\varpi'_m) = g(\varpi_1,\ldots,\varpi_m)=(0,\ldots,0,1)$. As explained at the beginning of the proof, we shall prove that $\xi'$ satisfies the additional property $ \dim(F\cap R\cap K)\geq 2\dim(F\cap R) - \dim(R)$ for any subspace $R$ of $\C^\Omega$ of dimension $\vr$ defined over $\Qbar$; here and below (until the end of the proof), for simplicity we write $\xi$, $\calC$, $\Vz$, $F$, $R$, $\varpi_j$, \ldots instead of $\xi'$, $\calC'$, $\Vz'$, $F'$, $R'$, $\varpi'_j$, \ldots.

We let $R$ be a subspace of $\C^\Omega$ of dimension $\vr$ defined over $\Qbar$, write 
$$ 
d = \dim(F\cap R), \quad \quad
d' = \dim(F\cap R \cap K),$$
and assume (by contradiction) that $d'<2d-\vr$. 

For any $\chi = (\chi_1 ,\ldots,\chi_m ) \in\C^m$, we denote by $F_\chi$ the subspace defined exactly like $F$, except that $\chi_1$, \ldots, $\chi_m$ are used instead of $\xi_1$, \ldots, $\xi_m$ to define the $Z_\capa$. We denote by $\calV$ the set of all $\chi\in\C^m$ such that 
$$ \dim F_\chi = \theta , \quad
 \dim(F_\chi\cap R) = d , \quad
 \dim(F_\chi\cap R \cap K)=d' ,$$
i.e. such that these dimensions are the same as for $\chi=\xi$. 

We claim that we have inclusions
\begin{equation}\label{eqratinclu}
(\C^m\setminus H_1)\cap (\C^m\setminus H_2)\cap (\C^m\setminus H_3)\cap H_4\cap\ldots\cap H_v
\subset \calV \subset H_4\cap\ldots\cap H_v
\end{equation}
where $v\geq 3$ and $H_1$, \ldots, $H_v$ are hypersurfaces of $\C^m$ of degree at most $\theta$ defined over $\Qbar$, such that $\Vz\not\subset H_i$ for any $i\in\untrois$. We recall that $\Vz$ is the Zariski closure of $\{\xi\}$ in $\mathbb A_{\Qbar}^m$, so that $\Vz\not\subset H_i$ is equivalent to $\xi\not\in H_i$. Indeed we denote by $Z(X_1,\ldots,X_m) \in M_{\omega,\theta}(\Qbar[X_1,\ldots,X_m])$ the matrix of which the columns are the coordinates of the $Z_\capa$ in the canonical basis of $\C^\Omega$ (for $\capa\in\Theta$), in which $\xi_j$ is replaced with $X_j$. Then $\dim F_\chi = \rk(Z(\chi))$ is equal to $\theta$ if, and only if, at least one minor of $Z(\chi)$ of size $\theta$ is non-zero. The coefficients of $Z(X_1,\ldots,X_m) $ are polynomials of total degree at most 1 in $X_1$, \ldots, $X_m$, so each minor of size $\theta$ has degree at most $\theta$. We choose a minor which is non-zero at $\xi$, and denote by $H_1$ the hypersurface defined in $\C^m$ by the vanishing of this minor. 

Now we consider the matrix $S(X_1,\ldots,X_m) \in M_{\omega,\theta+\vr}(\Qbar[X_1,\ldots,X_m])$ of which the first $\theta$ columns are those of 
 $Z(X_1,\ldots,X_m)$, and the last $\vr$ columns belong to $\Qbar^\Omega$ and make up a basis of $R$ (which is possible since $R$ is defined over $\Qbar$). Assuming that $\dim F_\chi=\theta$, we have $\dim(F_\chi\cap R)=d$ if, and only if, $\dim(F_\chi+R) = \theta+\vr-d$; this is equivalent to $\rk (S(\chi) ) = \theta+\vr-d$. This condition can be expressed as the vanishing of all minors of size $ \theta+\vr-d+1$, and the non-vanishing of at least one minor of size $ \theta+\vr-d$. Again we choose such a minor of size $ \theta+\vr-d$ that does not vanish at $\xi$, and denote by $H_2$ the corresponding hypersurface (which has degree at most $\theta$); we define $H_4$, $H_5$, \ldots to be the hypersurfaces defined by the vanishing of the minors of size $ \theta+\vr-d+1$. We proceed in the same way with $R\cap K$ instead of $R$ to ensure that $ \dim(F_\chi\cap R \cap K)=d'$. This concludes the proof of the claimed inclusions~\eqref{eqratinclu}.

 \medskip
 
These inclusions imply that $\calC\setminus\calC_1\subset\calV$ for some finite set $\calC_1$. Indeed, we have $\xi\in\calV\subset H_i$ for any $i\in\quatrev$, so that $\calC\subset\Vz\subset H_i$ by definition of $\Vz$, since $H_i$ is defined over $\Qbar$. For $i\in\untrois$, we have $\Vz\not\subset H_i$ and $H_i$ is a hypersurface of degree at most $\theta$, so that $\calC\not\subset H_i$ (by construction of $\calC$) and $\calC\cap H_i$ is a finite set; taking for $\calC_1$ the union of these finite sets, Eq.~\eqref{eqratinclu} yields $\calC\setminus\calC_1\subset\calV$.

Since $\calC_1 $ is finite, there exists a real number $M_1\geq M_0$ such that for any real $a\geq M_1$, the point $\chi(a) = (\chi_1(a) ,\ldots,\chi_m (a))$ belongs to $\calC\setminus\calC_1\subset\calV$. We shall focus on real algebraic values of $a\geq M_1$; since $\chi(a)\in \calV\cap\Qbar^m$, the subspace $F_{\chi(a)}$ is then defined over $\Qbar$ and we have $\dim F_{\chi(a)}= \theta$, $ \dim(F_{\chi(a)}\cap R) = d $, 
$\dim(F_{\chi(a)}\cap R \cap K)=d'$. We fix such an $a$, denoted by $a_0$, and consider a subspace $W$ defined over $\Qbar$ such that 
\begin{equation}\label{eqratw}
 \big( F_{\chi(a_0)}\cap R \cap K \big) \oplus W = F_{\chi(a_0)}\cap R.
 \end{equation}
For any $a\in\Qbar\cap \R$ such that $a\geq M_1$, we have $\chi(a)\in \calV$ so that 
$$ \dim W + \dim\big( F_{\chi(a)}\cap R\big) = (d-d')+d>\vr = \dim R$$
since we have assumed (by contradiction) that $d'<2d-\vr$. Accordingly these subspaces of $R$ are not in direct sum:
 there exists $u_a\in W\cap F_{\chi(a)}\cap R$ with $u_a\neq 0$. Since $W\cap F_{\chi(a)}\cap R$ is defined over $\Qbar$, we may assume that 
$u_a\in\Qbar^\Omega$. We write $u_a= (u_{a,\capa})_{\capa\in\Omega}$ with $ u_{a,\capa}\in\Qbar$. These coordinates satisfy
\begin{equation}\label{eqrat3}
u_{a,\capa} = \sum_{j=1}^m \chi_j(a) u_{a, \capa+e_j} \mbox{ for any } \capa\in\Omega\setminus\Theta
\end{equation}
since all generators of $ F_{\chi(a)}$ satisfy these linear equations. 

Let $(T_1,\ldots,T_w)$ be a basis of $W$ consisting of vectors of $\Qbar^\Omega$, with $w=\dim W=d-d'$. Since $u_a\in W\cap \Qbar^\Omega$ there exist $\lambda_{a,1}, \ldots,\lambda_{a,w}\in\Qbar$, not all zero, such that $u_a=\sum_{\ell=1}^w \lambda_{a,\ell} T_\ell$. Writing $T_\ell = (t_{\ell,\capa})_{\capa\in\Omega}$ we have $u_{a,\capa} = \sum_{\ell=1}^w \lambda_{a,\ell} t_{\ell,\capa}$ for any $\capa\in\Omega$. Using this into Eq.~\eqref{eqrat3} yields
\begin{equation}\label{eqrat4}
 \sum_{\ell=1}^w \lambda_{a,\ell} P_{\ell,\capa}(a)=0 \mbox{ for any } \capa\in\Omega\setminus\Theta,
\end{equation}
where
$$
P_{\ell,\capa}(z)= - t_{\ell,\capa} + \sum_{j=1}^m t_{\ell,\capa+e_j} \chi_j(z) 
$$
is a function algebraic over $\Qbar(z)$. Let $P(z)$ denote the matrix $(P_{\ell,\capa}(z))_{ \capa\in\Omega\setminus\Theta, 1\leq \ell \leq w}$.
For any $a\in\Qbar\cap \R$ with $a\geq M_1$, Eq.~\eqref{eqrat4} shows that $\rk(P(a))<w$: all minors of size $w$ of the matrix $P(z)$ vanish at $a$. Since these minors are functions algebraic over $\Qbar(z)$, they are identically zero: $P(z)$ has rank at most $w-1$, as a matrix with coefficients in $\overline{\mathbb Q(z)}$. This provides algebraic functions $\mu_1(z), \ldots, \mu_w(z) \in \overline{\mathbb Q(z)}$, not all zero, such that $\sum_{\ell=1}^w \mu_\ell(z) P_{\ell,\capa}(z)=0$ for any $ \capa\in\Omega\setminus\Theta$. In other words, 
\begin{equation}\label{eqrat6}
 \sum_{\ell=1}^w \mu_\ell(z) \Big(- t_{\ell,\capa} + \sum_{j=1}^m t_{\ell,\capa+e_j} \chi_j(z) \Big)=0 \mbox{ for any } \capa\in\Omega\setminus\Theta.
\end{equation}
Now as $z\to+\infty$ with $z\in\R$, each non-zero function $\mu_\ell(z)$ has an asymptotic behavior given by $\mu_\ell(z)\sim \mu_{\ell,0}z^{e_\ell}$ with $\mu_{\ell,0}\in\Qbar\etoile$ and $e_\ell \in \Q$; if $\mu_\ell(z)$ is identically zero we put $e_\ell=-\infty$. Let $e=\max(e_1,\ldots,e_m)$; for any $\ell$, we let $\mu_{\ell,1}=\mu_{\ell,0}$ if $e_\ell=e$, and $\mu_{\ell,1}=0$ otherwise, so that $\lim_{z\to+\infty} z^{-e} \mu_\ell(z) = \mu_{\ell,1}$. We recall that Eq.~\eqref{eqrat1} has a similar flavour, and that in this equation we have $\varpi_1=\ldots=\varpi_{m-1}=0$, $\varpi_m=1$, since $\xi'$ (denoted now by $\xi$) has been constructed for this purpose. Combining these limits and letting $z\to+\infty$, with $z\in\R$, Eq.~\eqref{eqrat6} yields 
(since $D>0$)
$$ \sum_{\ell=1}^w \mu_{\ell,1} t_{\ell,\capa+e_m} = 0 \mbox{ for any } \capa\in\Omega\setminus\Theta.
$$
Let $T = \sum_{\ell=1}^w \mu_{\ell,1} T_\ell\in W\setminus\{0\}$, and write $T = (t_{ \capa})_{\capa\in\Omega}$. Then we have $t_{\capa+e_m}=0$ for any $ \capa\in\Omega\setminus\Theta$, i.e. $t_\lambda=0$ for any $\lambda\in\Theta$ such that $\lambda_m\geq 1$. For any $\lambda\in\Omega\setminus\Theta$ such that $\lambda_m\geq 1$, we obtain $t_\lambda=\sum_{j=1}^m \chi_j(a_0)t_{\lambda+e_j} = 0$ since $T\in W \subset F_{\chi(a_0}$ (see Eq.~\eqref{eqrat3}). Therefore $T\in K$: this is a contradiction with the definition~\eqref{eqratw} of $W$. This concludes the proof of Lemma~\ref{lemrat2}.\end{proof}

\subsection{Proof of Theorem~\ref{thrat}}\label{ssec:proofthmthrat}

We prove Theorem~\ref{thrat} by induction on $m+N$. Letting $m,N\geq 1$, 
 if $m\geq 2$ (resp. $N\geq 2$) we may assume that Theorem~\ref{thrat} holds with $m-1$ instead of $m$ (resp. $N-1$ instead of $N$). We shall apply this idea using the linear map $\pi:\C^\Omega\to\C^\Ombar$ defined by $\pi(E_\capa)=E_{\capa-e_m}$ for any $\capa\in\Omega$; here we let 
$$\Ombar = \{\capa\in\N^m, \, \inegm\}.$$
As explained at the beginning of \S \ref{sectech}, we have $E_{\capa-e_m}=0$ if, and only if, $\capa-e_m$ has a negative coordinate, i.e. $\capa_m=0$. Therefore the kernel of $\pi$ is 
$$K =\ker \pi = \Span \{E_\capa, \, \capa\in\Omega, \, \capa_m=0\} $$
with the same notation $K$ as in Lemma~\ref{lemrat2}. 

\medskip

The sketch of the proof is the following. Using Theorem~\ref{thrat} with $m-1$ if $m\geq 2$, we shall prove that 
\begin{equation} \label{eqthrat1}
\dim(R\cap K) \geq \Big( 2 - \frac{m-2}{N+m-2}\Big) \dim(F\cap R\cap K).
\end{equation}
Then we shall use Theorem~\ref{thrat} with $N-1$ (if $N\geq 2$) to prove that 
\begin{equation} \label{eqthrat2}
\dim\pi(R ) \geq \Big( 2 - \frac{m-1}{N+m-2}\Big) \dim\pi(F\cap R).
\end{equation}
At last we shall conclude the proof by combining these inequalities with the one provided by Lemma~\ref{lemrat2}. 

\medskip

Let us start by proving Eq.~\eqref{eqthrat1}. It holds trivially if $m=1$ since in this case, $K=\{0\}$. Therefore we may assume $m\geq 2$ and let 
$\Omega' = \{\capa\in\N^{m-1}, \, \ineg\}$, $\xi'=(\xi_1,\ldots,\xi_{m-1})$, and $F'$ be the subspace defined from $(m-1,N,\xi')$ as explained before the statement of Theorem~\ref{thrat}, spanned by vectors $Z'_\capa\in\C^{\Omega'}$ for $\capa\in\Theta'=\{\capa\in\N^{m-1}, \, |\capa| = N\}.$

Let $\iota$ denote the injective linear map $\C^{\Omega'}\to\C^\Omega$ defined by $\iota(E_\capa)=E_{(\capa,0)}$ for any $\capa=(\capa_1,\ldots,\capa_{m-1})\in\Omega'$, where $
(\capa,0)$ stands for $ (\capa_1,\ldots,\capa_{m-1},0)$. We claim that $\iota(F')=F\cap K$. The inclusion $\iota(F')\subset F\cap K$ is obvious since $\iota(Z'_\capa)=Z_{(\capa,0)}$ for any $\capa\in\Theta'$. Conversely, let $Z = \sum_{\capa\in\Theta} \lambda_\capa Z_\capa\in F\cap K$, with complex numbers $ \lambda_\capa$. For any $\capa\in\Theta$, the coordinate of $Z$ on $E_\capa$ (in the canonical basis of $\C^\Omega$) is equal to $ \lambda_\capa$. Since $Z\in K$, we deduce that $ \lambda_\capa=0$ for any $ \capa\in\Theta$ such that $\capa_m\geq 1$. Therefore $Z=\iota(\sum_{\capa\in\Theta'} \lambda_{(\capa,0)} Z'_\capa)\in \iota(F')$ and the claim follows. 

We may apply Theorem~\ref{thrat} with $(m-1,N,\xi')$ to the subspace $\iota^{-1}(R)$ which is defined over $\Qbar$. This yields 
\begin{equation} \label{eqthrat3}
\dim\iota^{-1}(R ) \geq \Big( 2 - \frac{m-2}{N+m-2}\Big) \dim(F'\cap \iota^{-1}(R ) ).
\end{equation}
Since $\iota(F')=F\cap K$ we have $ F'\cap \iota^{-1}(R ) = \iota^{-1}(F\cap R\cap K) $. Now the image of $\iota$ is $K$, so that $\dim(F'\cap \iota^{-1}(R ) ) = \dim( F\cap R\cap K) $ and $\dim\iota^{-1}(R ) = \dim(R\cap K) $: Eq.~\eqref{eqthrat1} follows from Eq.~\eqref{eqthrat3}.

\medskip

We shall now prove Eq.~\eqref{eqthrat2}. If $N=1$ it reads $\dim\pi(R ) \geq \dim\pi(F\cap R)$ and holds trivially. Let us assume $N\geq 2$. Recall that $\Ombar = \{\capa\in\N^m, \, \inegm\}$, and denote by $\overline Z_\capa$, for $\capa\in\overline\Theta=\{\capa\in\N^m, \, |\capa|=N-1\}$, the vectors constructed from $\xi$ with respect to $m$ and $N-1$. We write $\overline F=\Span\{ \overline Z_\capa, \, \capa\in\overline\Theta\}$. It is clear that for any $\capa\in \Theta$ we have $\pi(Z_\capa)=\overline Z_{\capa-e_m}$ if $\capa_m\geq 1$, and $\pi(Z_\capa)=0$ otherwise. Therefore $\pi(F)=\overline F$, and Theorem~\ref{thrat} applied to $\pi(R)$ with $(m ,N-1,\xi )$ yields
\begin{equation} \label{eqthrat4}
\dim\pi(R ) \geq \Big( 2 - \frac{m-1}{N+m-2}\Big) \dim(\pi(F) \cap \pi(R ) ).
\end{equation}
Since $\pi(F\cap R)\subset \pi(F) \cap \pi(R )$ this implies Eq.~\eqref{eqthrat2}.

\bigskip

We may now conclude the proof of Theorem~\ref{thrat}, since using Lemma~\ref{lemrat2} we may assume that 
\begin{equation} \label{eqthrat4bis}
\dim R \geq 2 \dim( F \cap R ) - \dim (F\cap R\cap K ).
\end{equation}
The restriction $\pi_R$ has kernel $R\cap K$ and image $\pi(R)$, so that $\dim R =\dim(R\cap K)+\dim\pi(R)$. Similarly, $\dim (F\cap R) =\dim(F\cap R\cap K)+\dim\pi(F\cap R)$. Therefore adding Eqs.~\eqref{eqthrat1} and~\eqref{eqthrat2} yields
$$
\dim R \geq \Big( 2 - \frac{m-1}{N+m-2}\Big) \dim(F \cap R ) + \frac{1}{N+m-2} \dim(F\cap R\cap K).
$$
Multiplying this equation by $\frac{N+m-2}{N+m-1}$, and adding Eq.~\eqref{eqthrat4bis} divided by $N+m-1$, yields
\begin{equation} \label{eqfin}
\dim R \geq \Big( 2 - \frac{m-1}{N+m-1}\Big) \dim(F \cap R ) .
\end{equation}
This concludes the proof of the inequality in Theorem~\ref{thrat}.

\medskip

Assume now that equality holds in Eq~\eqref{eqfin}. If $m=1$ then $\dim F = \theta=1$, $\omega=2$, and $R$ is either $\{0\}$ or $\C^{\Omega}$ since $F$ is not defined over $\Qbar$ (because $\xi_1$ is transcendental). Assume now that $m\geq 2$, and notice that equality holds in Eqs.~\eqref{eqthrat1},~\eqref{eqthrat2},~\eqref{eqthrat4} and~\eqref{eqthrat4bis}. Using Theorem~\ref{thrat} with $m-1$ instead of $m$, since equality holds in Eq.~\eqref{eqthrat1} we have either $R\cap K = \{0\}$ or $R\cap K = K$. In the former case, equality in Eq.~\eqref{eqthrat4bis} implies $ \dim R = 2 \dim(F\cap R)$, which implies $\dim R = 0 $ since we have assumed that equality holds in Eq~\eqref{eqfin}. In the latter case, we use (if $N\geq 2$) Theorem~\ref{thrat} and the fact that equality holds in Eq.~\eqref{eqthrat4} to deduce a new 
alternative: either $\pi(R)=\C^{\Ombar}$ or $\pi(R)=\{0\}$. In the former case, since $R\cap K = K$ we obtain $R= \C^\Omega$. In the latter case, we have $R= K$ so that Eq.~\eqref{eqthrat1} reads
 $\dim R = \Big( 2 - \frac{m-2}{N+m-2}\Big) \dim(F\cap R) >0$. This contradiction with equality in Eq.~\eqref{eqfin} concludes the proof of Theorem~\ref{thrat}.

\noindent St\'ephane Fischler, Universit\'e Paris-Saclay, CNRS, Laboratoire de math\'ematiques d'Orsay, 91405 Orsay, France.

\medskip

\noindent Tanguy Rivoal, Universit\'e Grenoble Alpes, CNRS, Institut Fourier, CS 40700, 38058 Grenoble cedex 9, France.

\bigskip

\noindent Keywords: $E$-functions, Irrationality Measures, Graded Pad\'e Approximants, Roth's Theorem.

\bigskip

\noindent MSC 2020: 11J82 (Primary), 11J91 (Secondary)

\end{document}